\theoremstyle{plain}
\newtheorem{theorem}{Theorem}[section]
\newtheorem*{theorem*}{Theorem}
\newtheorem{lemma}[theorem]{Lemma}
\newtheorem{corollary}[theorem]{Corollary}
\newtheorem{proposition}[theorem]{Proposition}
\newtheorem{problem}[theorem]{Problem}
\theoremstyle{remark}
\theoremstyle{definition}
\newtheorem{definition}[theorem]{Definition}
\newtheorem*{Acknowledgements*}{Acknowledgements}
\newcommand{\finsum}[3]{
\underset{#1=#2}{\overset{#3}\sum}}
\newcommand{\dint}{
\displaystyle\int}
\newcommand{\dsum}{
\displaystyle\sum}
\newcommand{\zsumnzero}[1]{
\underset{#1\neq 0}\dsum}
\newcommand{\R}{
\mathbb{R}}
\newcommand{\C}{
\mathbb{C}}
\newcommand{\N}{
\mathbb{N}}
\newcommand{\Z}{
\mathbb{Z}}
\newcommand{\supp}{
\textnormal{supp}}
\newcommand{\schwartz}{
\mathscr{S}}
\newcommand{\bracket}[1]{
\left\langle#1\right\rangle}
\newcommand{\phica}{
\widehat{\phi_{\alpha,c}}}
 \newcommand{\eps}{
 \varepsilon}
 \newcommand{\floor}[1]{
 \lfloor#1\rfloor}
 \title{Regular Families of Kernels for Nonlinear Approximation}
\author{Keaton Hamm}
\address{Department of Mathematics, University of Arizona, Tucson, AZ, 85721}
\email{hamm@math.arizona.edu} 
\author{Jeff Ledford}
\address{Department of Mathematics and Computer Science, Longwood University, Farmville, VA, 23901}
\email{ledfordjp@longwood.edu}
\subjclass[2010]{41A25, 41A30, 41A63,42B99, 42C40}
  \keywords{Nonlinear approximation, General Multiquadrics, Radial Basis Functions, Triebel--Lizorkin space, Sobolev space}
\begin{document}

\begin{abstract}

This article studies sufficient conditions on families of approximating kernels which provide $N$--term approximation errors from an associated nonlinear approximation space which match the best known orders of $N$--term wavelet expansion.  These conditions provide a framework which encompasses some notable approximation kernels including splines, cardinal functions, and many radial basis functions such as the Gaussians and general multiquadrics.  Examples of such kernels are given to justify the criteria.  Additionally, the techniques involved allow for some new results on $N$--term Greedy interpolation of Sobolev functions via radial basis functions.
\end{abstract}
 
\maketitle
\allowdisplaybreaks

\section{Introduction}

Among the many notions of smoothness spaces of functions on $\R^d$ are spaces of fractional smoothness which may be defined by, or intimately related to, a wavelet expansion.  The primary examples to be considered here are the {\em Besov} and {\em Triebel--Lizorkin} spaces.  For an overview of the basic facts about these spaces, the reader is invited to consult \cite{Kalton} and the many references therein.  DeVore, Jawerth, and Popov \cite{DJP} studied approximation orders for best $N$--term approximation of functions from these spaces with respect to a given wavelet system; in particular, it was shown that if $f$ is in the Triebel--Lizorkin space $F_{\tau,q}^s$ (where $s\in\R_+$ is the smoothness) then the error of the best $N$--term approximation in $L_p$ (for $p = (1/\tau-s/d)^{-1}$) via many wavelet systems is $O(N^{-s/d})$.

This article studies $N$--term approximation of functions in these smoothness spaces from nonlinear spaces associated with different kernels, which are typically related to a spline system or a {\em radial basis function}, i.e. one for which $\phi(x)=\psi(|x|)$ for a univariate function $\psi$.

More specifically, suppose we have a family of continuous kernels depending upon some parameter, $(\phi_\alpha)_{\alpha\in A}$ for some unbounded $A\subset(0,\infty)$.  Considering the following nonlinear approximation space:
\begin{equation}\label{EQPHIN}\Phi_N:=\left\{\finsum{j}{1}{N}a_j\phi_{\alpha_j}(\cdot-x_j)\;:\;(a_j)\subset\C, (\alpha_j)\subset A, (x_j)\subset\R^d\right\},\end{equation}
we seek to answer the following problem.
\begin{problem}\label{PROB}
Find sufficient conditions on the family $(\phi_\alpha)$ such that for $f\in F_{\tau,q}^s$, 
there exists an $S_{f,N}\in\Phi_N$ such that
\[\|f-S_{f,N}\|_{L_p} \leq C_{f} N^{-s/d}.\]
\end{problem}

Similar problems have appeared in various contexts throughout the literature.  
In the shift invariant setting, de Boor, DeVore, and Ron investigated similar problems, \cite{DDR1, DDR2}.  The methodology used in those papers exploits features of the periodization of $\vert\widehat{\phi}\vert^2$ to provide approximation results analogous the one above.  In \cite{KP}, Kyriazis and Petrushev develop sufficient conditions for unconditional bases in Triebel--Lizorkin spaces involving moment conditions from which an answer to their version of the above problem arose.  Interestingly, although their techniques are different, there is overlap in the examples that they produce.  In \cite{DR}, DeVore and Ron study approximation from both linear and nonlinear spaces using kernels that arise as the solution of certain elliptic differential operators. This technique encompasses some growing kernels, but does not allow the use of some notable examples, such as multiquadrics.  Hangelbroek and Ron \cite{HR} analyzed best $N$--term approximations via Gaussians.  The analyticity and rapid decay of the Gaussian kernel allows for very fast approximation in theory; however, their analysis does not directly extend to allow kernels of finite smoothness, and certainly not growing kernels.  Additionally, Buhmann and Dai \cite{BD} carried out a related analysis for quasi-interpolation schemes based around polynomial reproduction.  Our approximation scheme (detailed in Section 3) is similar to theirs in that it is essentially one of quasi-interpolation, but the kernel they use is formed indirectly from the initial kernel to guarantee polynomial reproduction, and so is of a different form.
Our main results give sufficient conditions on finite-smoothness kernels (both decaying and growing ones) which allow for the desired approximation orders using properties of the kernel.
Some notable implications of the current analysis are that other growing kernels such as the {\em multiquadrics} of all orders may be used, and also our methods allow for the use of {\em cardinal functions}, which gives rise to some new nonlinear methods involving interpolation.  The use of cardinal functions in turn allows for error estimates for Sobolev spaces, which cannot be achieved directly from the Besov and Triebel--Lizorkin space estimates. 

The primary concern of this work is to demonstrate that the approximation orders of the nonlinear spaces $\Phi_N$ associated with a large variety of kernels match the known rates for nonlinear wavelet systems, which are determined by the smoothness of the target functions.  However, in the course of the proof, we exhibit a concrete approximant to a given function $f$ which attains the error bound of $O(N^{-s/d})$.  As an intermediate step in the procedure, we use the fact that such $f\in F_{\tau,q}^s$ admits a wavelet expansion of a certain type, though the choice of the wavelet is not overly important, and we stress that the use of wavelets here is merely an intermediate step in the analysis.  

Specifically, we employ a two-stage approximation process as follows:

{\bf Step 1:}  Form a linear approximation space which provides extremely good error bounds for approximating mother wavelets which are bandlimited Schwartz functions.  Then truncate the approximant.  If $\psi$ is the wavelet, let $T_N\psi$ be its truncated approximant from the linear space, which is also an $N$--term approximant in $\Phi_N$.

{\bf Step 2:}  Given a wavelet expansion $f=\sum_I f_I\psi_I$, where $I$ are dyadic cubes, and $\psi$ is as in Step 1, approximate $S_{f,N} = \sum_I f_IT_{N_I}\psi_I$, for some cost distribution $(N_I)$ with $\sum_I N_I \leq N$.

For a particular cost distribution (i.e. choice of $(N_I)$ based on the wavelet coefficients $(f_I)$), this $S_{f,N}$ obtains the desired error bounds for the given $f$.  In general terms, we now state the main theorems contained below (Theorems \ref{THMLpMain} and \ref{THMGrowingMain}):

\begin{theorem*}
Suppose that $(\phi_\alpha)_{\alpha\in A}$ is a family of kernels satisfying conditions (A1)--(A6) or (B1)--(B4) below.  Then for the appropriate choices of parameters $s,\tau,q$, and $p$, every $f\in F_{\tau,q}^s$  has an $N$--term approximant $S_{f,N}\in\Phi_N$ such that 
$$\|f-S_{f,N}\|_{L_p}\leq CN^{-s/d}|f|_{F_{\tau,q}^s}.$$
\end{theorem*}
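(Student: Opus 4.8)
The plan is to make the two-stage scheme concrete and to arrange matters so that the entire cost of the second stage is negligible against the error of wavelet truncation; the final rate is then inherited from the DeVore--Jawerth--Popov estimate. Throughout I would fix the parameters so that $p=(1/\tau-s/d)^{-1}$ as in \cite{DJP}, and take the mother wavelet $\psi$ to be a bandlimited Schwartz function (a Meyer-type wavelet), so that each $f\in F_{\tau,q}^s$ expands as $f=\sum_I f_I\psi_I$ over dyadic cubes $I$, with the sequence norm of $(f_I)$ equivalent to $|f|_{F_{\tau,q}^s}$.

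For Step 1, the role of hypotheses (A1)--(A6) (respectively (B1)--(B4)) is to control, by Fourier analysis, the interpolation operator $\I$ associated with the shift-invariant space generated by $\phi_\alpha$. Using lower bounds on the symbol $\sum_{k}\widehat{\phi_\alpha}(\cdot+2\pi k)$ together with the decay of the associated cardinal function, I would establish that $\I$ reproduces a bandlimited Schwartz $\psi$ at a \emph{super-algebraic} rate, i.e. $\|\psi-\I\psi\|_{L_p}\le C_m h^{m}$ for every $m$ as the grid spacing $h\to 0^+$. Because $\psi$ is Schwartz, the globally-defined interpolant may then be truncated to finitely many nodes near the origin with only a tail error that again decays faster than any power; balancing the spacing against the truncation radius, $N$ nodes produce the $N$-term element $T_N\psi\in\Phi_N$, and I would record a single estimate $\|\psi-T_N\psi\|_{L_p}\le\rho(N)$ with $\rho$ decaying super-algebraically.

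For Step 2, I would use the scaling structure encoded in the hypotheses (so that a dilate of $\phi_\alpha$ is again an admissible kernel and $T_{N_I}\psi_I$ is a genuine $N_I$-term element of $\Phi_N$) to form $S_{f,N}=\sum_{I\in\Lambda}f_I\,T_{N_I}\psi_I$, where $\Lambda$ consists of the $M$ largest wavelet coefficients and the cost distribution $(N_I)_{I\in\Lambda}$ satisfies $\sum_{I\in\Lambda}N_I\le N$. The triangle inequality splits the error as $\|f-S_{f,N}\|_{L_p}\le\|\sum_{I\notin\Lambda}f_I\psi_I\|_{L_p}+\sum_{I\in\Lambda}|f_I|\,\|\psi_I-T_{N_I}\psi_I\|_{L_p}$. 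The first term is $O(M^{-s/d})\,|f|_{F_{\tau,q}^s}$ by the best $N$-term theory of \cite{DJP}; for the second, dilation scaling gives $\|\psi_I-T_{N_I}\psi_I\|_{L_p}=2^{jd(1/2-1/p)}\rho(N_I)$ for $I$ at level $j$. Since $\rho$ decays faster than any polynomial, a per-wavelet allocation $N_I$ that also absorbs the dilation factors $2^{jd(1/2-1/p)}$ can be made at negligible total cost, forcing the second sum to be dominated by $M^{-s/d}$ while keeping $\sum_{I\in\Lambda}N_I$ comparable to $M$. Taking $N\asymp M$ then yields $CN^{-s/d}|f|_{F_{\tau,q}^s}$.

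The main obstacle is Step 1. Proving the super-algebraic approximation of bandlimited functions requires delicate and uniform Fourier-analytic estimates on $\widehat{\phi_\alpha}$ and on the cardinal function it generates, and these must be stable under the dilations invoked in Step 2. The growing-kernel case (B) is the most delicate point: since $\phi_\alpha$ grows at infinity, as for the multiquadrics, the interpolant and its symbol must be interpreted with care — typically by passing to suitable finite differences or distributional sums that tame the growth — before the decay and truncation estimates of Step 1 can be carried through.
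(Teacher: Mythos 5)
Your two-stage skeleton matches the paper's architecture, but there are two substantive gaps. First, in Step 1 you assert a super-algebraic rate $\|\psi-\I\psi\|_{L_p}\le C_m h^m$ for every $m$. The hypotheses (A1)--(A6) do not deliver this for a fixed kernel: condition (A3) only guarantees $g_\alpha(h)\le Ch^k$ once $\alpha\ge\alpha_k$, so a single kernel of finite smoothness (e.g.\ a fixed inverse multiquadric) yields only a finite algebraic order, and one must \emph{choose} $\alpha$ large depending on the target order $k>s$. This is precisely the point of departure from the Gaussian case of Hangelbroek--Ron, where analyticity does give all orders at once. Since your Step 2 leans on the super-algebraic decay of $\rho$ to make the per-wavelet allocation ``negligible,'' the error propagates. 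Relatedly, the paper's Step 1 operator is the quasi-interpolant $T_h^\sharp f=\sum_j f_\phi(hj)\phi(\cdot-hj)$ with $\widehat{f_\phi}=\widehat f/\widehat\phi$, analyzed through Poisson summation and the ratios $A_{\alpha,h,j}$; no lower bound on the periodized symbol is needed. What is needed, and what you omit, is the \emph{pointwise localized} estimate $|\psi_I(x)-(T_{N}\psi)_I(x)|\le CN^{-k/d}(1+|x-c(I)|/\ell(I))^{-2k}$ (Theorem 4.1 of the paper), which requires the decay hypotheses (A4)--(A5) and the truncation radius $h^{-2}$; an $L_p$ bound on $\psi-T_N\psi$ alone is not enough for Step 2.

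Second, your Step 2 is a different and, as written, incomplete accounting. You threshold to the $M$ largest coefficients, invoke DeVore--Jawerth--Popov for the tail, and bound the retained part by the scalar triangle inequality $\sum_{I\in\Lambda}|f_I|\|\psi_I-T_{N_I}\psi_I\|_{L_p}$. This discards the spatial envelope and leaves you needing each of the $M\asymp N$ terms to be $\lesssim N^{-s/d-1}|f|_{F^s_{\tau,q}}$ after absorbing the level-dependent factors $2^{jd(1/2-1/p)}$; you do not verify that the resulting allocation satisfies $\sum_I N_I\le N$, and with only the algebraic rate $N_I^{-k/d}$ actually available this is exactly where the difficulty sits. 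The paper instead distributes the budget over \emph{all} dyadic cubes via the maximal-function cost $c_I=|f|_{F^s_{\tau,q}}^{-\tau}m_{s,q,I}^{\tau-q}|f_I|^q|I|^qN$, dominates the total error by $\|\sum_I|f_I|R_I\|_{L_p}$ with $R_I$ the localized envelope, and converts this to $\|\sum_I\min\{1,c_I^{-k/d}\}|f_I|\chi_I\|_{L_p}$ via Lemma 4.4 (the analogue of Lemma 8 of Hangelbroek--Ron, which rests on the Fefferman--Stein inequality). That lemma, together with the specific cost distribution, is the missing key step; without it (or a completed version of your allocation computation) the claimed $N^{-s/d}$ bound is not established. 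Your closing remarks on the growing-kernel case correctly identify divided differences as the taming device, consistent with (B2)--(B4).
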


As a final note, let us point out that the approximation spaces of the form $\Phi_N$ considered here are both nonlinear as mentioned above, but also nonuniform in the sense that no structure is required \textit{a priori} on the shifts $(x_j)$ -- although the particular method illustrated in the sequel will use regular shifts at different levels.  This is in contrast to the shift-invariant space literature, in which the approximation spaces are of the form $\overline{\text{span}}^{L_p}\{\phi(\cdot-j):j\in\Z^d\}$, though the resulting approximation bounds are similar.  For approximation and structural results of these spaces, consult \cite{Bownik,DDR1,DDR2,Jia,Johnson,Johnson2}.  Typical results therein apply to approximating (or interpolating) Sobolev classes, and provide approximation rates of order $h^k$, where $k$ is the specified smoothness of the Sobolev space.  Approximation spaces which are nonuniform but linear, i.e. of the form $\overline{\text{span}}^{L_p}\{\phi(\cdot-x_j):j\in\Z^d\}$, have been studied for quite some time as well.  In recent works, these have been called quasi shift-invariant spaces, e.g. \cite{Atreas,DV,GS,HLQuasi}.  For a more extensive history, Wendland's text on scattered-data interpolation is an excellent reference \cite{Wendland}.  For approximation orders for interpolation of Sobolev functions similar to the theorem above, consult \cite{Hamm,Ledford}. 

The rest of the paper is laid out as follows: Section \ref{SECPrelimin} begins with some basic definitions, followed by the conditions on decaying kernels for rapid approximation of bandlimited Schwartz functions in Section \ref{SECCriteria}.  Approximation orders using nonlinear approximation spaces associated with these kernels are given in Sections \ref{SECLp} and \ref{SECLinfty}, and examples of kernels satisfying the regularity conditions are given in Section \ref{SECExamples}.  Section \ref{SECGrowing} provides sufficient conditions on growing kernels to allow for similar approximation rates, and examples of such families of kernels are given in Section \ref{SECGrowingExamples}.  In Section \ref{SECInterpolation}, we move the discussion to the related topic of cardinal interpolation, which allows us to give approximation orders for broader classes of smooth functions, namely Sobolev spaces. This analysis also develops some Greedy interpolation schemes related to Gaussians and multiquadrics.  To give a better idea of the limitations, a special case of the cost distribution for the approximation scheme is discussed in more detail in Section \ref{SECCost}.  

\section{Preliminaries}\label{SECPrelimin}

Let $\schwartz$ be the space of Schwartz functions on $\R^d$, that is the collection of infinitely differentiable functions $\phi$ such that for all multi-indices $\alpha$ and $\beta$, $\underset{x\in\R^d}\sup\left|x^\alpha D^\beta\phi(x)\right|<\infty\;.$
The Fourier transform and inverse Fourier transform of a Schwartz function $\phi$ are given, respectively, by
$$
 \widehat{\phi}(\xi):=\int_{\R^d} \phi(x)e^{-i\bracket{\xi, x}}dx,\quad \xi\in\R^d;\qquad \phi^\vee(x)= \dfrac{1}{(2\pi)^d}\dint_{\R^d}\phi(\xi)e^{i\bracket{x,\xi}}d\xi,\quad x\in\R^d,
$$ where $\bracket{\cdot,\cdot}$ is the usual scalar product on $\R^d$.  Let $\Omega\subset\R^d$ be an open set. Then let $L_p(\Omega)$, $1\leq p\leq\infty$, be the usual space of $p$--integrable functions on $\Omega$ with its usual norm.  If no set is specified, we mean $L_p(\R^d)$.  Additionally, we will use $\N_0$ to denote the set of natural numbers including $0$, and $\R_+$ to denote the positive real numbers, i.e. $(0,\infty)$.

We denote by $F_{\tau,q}^s$ the Triebel--Lizorkin space (defined in Section \ref{SECLp}, where $s\in\R_+$ essentially determines the smoothness of the functions in the space).  For $1\leq p\leq\infty$ and $k\in\N$, let $W_p^k(\Omega)$ be the Sobolev space of functions in $L_p(\Omega)$ whose weak derivatives of order up to $k$ are also in $L_p(\Omega)$.  The norm  and seminorm on the Sobolev space may be defined, respectively, via $$\|f\|_{W_p^k(\Omega)}:=\|f\|_{L_p(\Omega)}+|f|_{W_p^k(\Omega)},\quad \textnormal{and}\quad |f|_{W_p^k(\Omega)} = \max_{|\beta|=k}\|D^\beta f\|_{L_p(\Omega)}.$$

Let $PW_S$ be the Paley--Wiener space of bandlimited $L_2$ functions whose Fourier transform is supported on $S$.  If $B(0,R)$ is the Euclidean ball of radius $R$ centered about the origin, define the space $\schwartz_B:=\schwartz\cap PW_{B(0,R)}$; in the sequel, $R$ will be determined from the context.  For a set $S$, let $\chi_S$ be the function which takes value 1 on $S$ and 0 elsewhere, and let $|S|$ denote the Lebesgue measure of the set $S$.  We use $C$ to denote constants, and where appropriate add subscripts to emphasize the parameters they may depend on.  

\section{Regularity Criteria and Approximation in $\schwartz_B$}\label{SECCriteria}

We begin with Step 1 described above, which is to give criteria on kernels which allows for rapid approximation of bandlimited Schwartz functions from a linear space involving the kernels.  For now, we resolve our problem for decaying kernels, and turn to growing ones in later sections.  Let $A$ be an infinite subset of $(0,\infty)$, and $(\phi_\alpha)_{\alpha\in A}$ be a family of continuous kernels depending on the parameter $\alpha$, and let $\Phi_N$ be as defined in \eqref{EQPHIN}.

Letting $R>0$ be fixed, but arbitrary, and $B:=B(0,R)$, we introduce the following regularity conditions and assume that they hold from here on unless otherwise noted.
\begin{enumerate}
    \item[(A1)] $\Phi_N$ is closed under translation and dilation.
    \item[(A2)] Each $\phi_\alpha$ is continuous, and $\underset{\alpha\in A}\sup\|\phi_\alpha\|_{L_\infty}\leq C$.
    \item[(A3)] Let $A_{\alpha,h,j}:=\dfrac{\widehat{\phi_\alpha}(\cdot+{2\pi j}/{h})}{\widehat{\phi_\alpha}}.$ Suppose that $\zsumnzero{j}\|A_{\alpha,h,j}\|_{L_{\infty}(B)}\leq g_\alpha(h)$ for some function $g_\alpha(h)$, and that for every $k\in\N_0$, there exists an $\alpha_k\in A$ such that for every $\alpha\geq\alpha_k$, $g_\alpha(h)\leq Ch^k$ for some absolute constant $C$.
    \item[(A4)] For every $k\in\N$, there exists an $\alpha_k'\in A$ such that for every $\alpha\geq\alpha_k'$, $D^\gamma(1/\widehat{\phi_\alpha})\in L_2(B)$ for every $|\gamma|\leq k$.
    \item[(A5)] For every $k\in\N$, there exists an $\alpha_k''\in A$ such that for every $\alpha\geq\alpha_k''$, $|\phi_\alpha(x)| = O(|x|^{-2k}),$ $|x|\to\infty$.
    \item[(A6)] For every $\alpha\in A$, $|\phi_\alpha(x)|+|\widehat{\phi_\alpha}(x)|\leq C(1+|x|)^{-d-\eps}$ for some $C,\eps>0$.
\end{enumerate}

The analysis in \cite{HR} is aided by the fact that the Gaussian has rapid decay -- a requirement we drop in order to consider a family of general inverse multiquadrics which serves as the typical example of a collection of kernels which satisfies (A1)--(A6).  These conditions allow us to provide norm and pointwise estimates for functions in $\schwartz_B$.  The requirement (A1) is to ensure the ease of approximation of the wavelet basis, while (A6) allows one to use the Poisson summation and Fourier inversion formulas (of course (A6) may be replaced by any other condition which allows these to hold).  The utility of the remaining criteria will reveal itself more clearly in the sequel, but essentially (A2)--(A5) guarantee that after some value of the parameter $\alpha$, the kernels are sufficiently localized, smooth, and decaying.  Since the Triebel--Lizorkin spaces are defined by wavelet systems, these criteria guarantee that for any resolution level of the wavelet, the parameter may be chosen so that the kernel is sufficiently localized.  There are other feasible ways to determine such criteria; for instance, various sets of sufficient conditions can be found in \cite{KP, HR, DR, BD}, but our aim here is to give readily checked conditions based on the kernels themselves.  Presently, we define an approximation suited to our purposes.  

For $f\in \schwartz_B$ and $\phi\in (\phi_\alpha)$, we first define $f_\phi$ to be the function such that $\widehat{f_\phi}=\widehat f/\widehat\phi$.  The main obstruction is that $f_\phi$ need not be smooth.  Nevertheless,  we define
\[
T_h^{\sharp}f(x):=\sum_{j\in\mathbb{Z}^d}f_\phi(hj)\phi(x-hj).
\]


\begin{proposition}\label{PROP1}
If $\alpha\geq\alpha_0$ and $h<\pi/R$, then for every $f\in \schwartz_B$,
$$\|f-h^dT_h^\sharp f\|_{L_\infty}\leq C\|\widehat{f}\|_{L_1}g_\alpha(h),$$
where $C>0$ is independent of $f$ and $h$.
\end{proposition}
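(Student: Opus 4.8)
The plan is to pass to the Fourier side, where $h^d T_h^\sharp$ becomes transparent, and then return to an $L_\infty$ bound via Fourier inversion. First I would compute the Fourier transform of $h^d T_h^\sharp f$. Transforming the defining series termwise gives
\[
\widehat{h^d T_h^\sharp f}(\xi) = h^d\,\widehat{\phi}(\xi)\sum_{j\in\Z^d} f_\phi(hj)\,e^{-ih\langle\xi,j\rangle},
\]
and the periodized sample sum on the right is precisely what the Poisson summation formula handles: applying it to $y\mapsto f_\phi(y)e^{-i\langle\xi,y\rangle}$ turns the sum into $h^{-d}\sum_{j}\widehat{f_\phi}(\xi+2\pi j/h)$. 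Using $\widehat{f_\phi}=\widehat{f}/\widehat{\phi}$, this yields
\[
\widehat{h^d T_h^\sharp f}(\xi) = \widehat{\phi}(\xi)\sum_{j\in\Z^d}\frac{\widehat{f}(\xi+2\pi j/h)}{\widehat{\phi}(\xi+2\pi j/h)}.
\]
The $j=0$ term is exactly $\widehat{f}(\xi)$, so subtracting it isolates the error as
\[
\widehat{f}(\xi)-\widehat{h^d T_h^\sharp f}(\xi) = -\,\widehat{\phi}(\xi)\sum_{j\neq0}\frac{\widehat{f}(\xi+2\pi j/h)}{\widehat{\phi}(\xi+2\pi j/h)}.
\]

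Next I would estimate the $L_1$ norm of this error, since Fourier inversion gives $\|f-h^dT_h^\sharp f\|_{L_\infty}\leq(2\pi)^{-d}\|\widehat{f}-\widehat{h^dT_h^\sharp f}\|_{L_1}$. The decisive geometric fact is that $f\in H_B$ forces $\widehat{f}$ to be supported in $B=B(0,R)$, while $h<\pi/R$ makes $2\pi/h>2R$, so the translated supports $B(-2\pi j/h,R)$ are pairwise disjoint across $j\in\Z^d$. Consequently the terms of the error sum have disjoint supports in $\xi$, and the $L_1$ norm splits into a sum of integrals. Substituting $\eta=\xi+2\pi j/h$ in the $j$-th integral sends each support back onto $B$ and produces
\[
\|\widehat{f}-\widehat{h^dT_h^\sharp f}\|_{L_1} = \int_B |\widehat{f}(\eta)|\sum_{j\neq0}\frac{|\widehat{\phi}(\eta-2\pi j/h)|}{|\widehat{\phi}(\eta)|}\,d\eta.
\]
Recognizing the summand as $|A_{\alpha,h,-j}(\eta)|$ and reindexing $j\mapsto-j$, I would bound the inner sum uniformly on $B$ by $\sum_{j\neq0}\|A_{\alpha,h,j}\|_{L_\infty(B)}\leq g_\alpha(h)$ via (A3), leaving $g_\alpha(h)\int_B|\widehat{f}|\leq g_\alpha(h)\|\widehat{f}\|_{L_1}$. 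Absorbing $(2\pi)^{-d}$ into $C$ gives the stated inequality, with $C$ manifestly independent of $f$ and $h$.

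The main obstacle is not the algebra but the justification of the two analytic manipulations: the termwise Fourier transform of the defining series for $T_h^\sharp f$ and, above all, the application of Poisson summation. This is exactly where (A6) is needed, since the decay $|\phi_\alpha(x)|+|\widehat{\phi_\alpha}(x)|\leq C(1+|x|)^{-d-\eps}$ guarantees absolute convergence of the relevant series and legitimizes both Poisson summation and the final Fourier inversion. A secondary point is to confirm that $f_\phi$, and hence the ratios $\widehat{f}/\widehat{\phi}$ and $\widehat{\phi}(\cdot+2\pi j/h)/\widehat{\phi}$, are well defined on $B$; this is where the regularity conditions (in particular (A4), forcing $\widehat{\phi_\alpha}$ to be nonvanishing with $1/\widehat{\phi_\alpha}$ controlled on $B$) ensure that dividing by $\widehat{\phi}$ on the support of $\widehat{f}$ is harmless. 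The threshold $\alpha_0$ is simply the value from the regularity criteria beyond which $g_\alpha(h)$ is finite and these divisions are valid; note that here only the defining inequality for $g_\alpha(h)$ is used, not yet its decay in $h$.
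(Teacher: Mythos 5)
Your proof is correct and arrives at the same aliasing identity as the paper, but you run the computation on the dual side. The paper Fourier-inverts the samples $f_\phi(hj)$, applies Poisson summation to the series of translates $\sum_j\phi(x-hj)e^{i\langle\xi,hj\rangle}$ --- a step justified directly by the decay of $\phi$ and $\widehat\phi$ in (A6) --- and then bounds the resulting integral pointwise using $\supp\widehat f\subset B$. You instead transform $h^dT_h^\sharp f$, apply Poisson summation to the sample sum $\sum_jf_\phi(hj)e^{-i\langle\xi,hj\rangle}$, and return via Fourier inversion through an $L_1$ bound; both routes land on the identical estimate $(2\pi)^{-d}\big(\sum_{j\neq0}\|A_{\alpha,h,j}\|_{L_\infty(B)}\big)\|\widehat f\|_{L_1}$. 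Your disjoint-supports observation is correct but not needed --- the triangle inequality on the $L_1$ norm already gives the bound. The one point to tighten is your attribution of the Poisson-summation justification to (A6): in your variant the function being periodized is $y\mapsto f_\phi(y)e^{-i\langle\xi,y\rangle}$, so what must actually be verified is the decay of $f_\phi$, which is precisely the ``main obstruction'' the paper flags ($f_\phi$ need not be smooth or rapidly decaying). That decay comes from (A4) (as in the proof of Proposition \ref{LEMUniformHBound}, where $|f_\phi(x)|\lesssim(1+|x|)^{-k-d}$ is derived from $D^\gamma(1/\widehat{\phi})\in L_2(B)$), so your route implicitly requires $\alpha$ large enough for (A4) with $|\gamma|\leq d+1$, whereas the paper's placement of Poisson summation needs only (A6). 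This costs nothing for the theorem, but it is the one place where the two arguments genuinely differ in which hypothesis is doing the work.
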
  

\begin{proof}
By the inversion formula and the Poisson summation formula (both are justified by (A6)),
\begin{align*}
h^dT_h^\sharp f(x)& = \dfrac{h^d}{(2\pi)^d}\dint_{\R^d}\hat{f}(\xi) \left[ \dfrac{1}{\widehat{\phi}(\xi)} \sum_{j\in\mathbb{Z}^d}\phi(x-hj)e^{i\langle \xi, hj\rangle}  \right]   d\xi \\ 
& = \dfrac{1}{(2\pi)^{d}}\int_{\mathbb{R}^d} \hat{f}(\xi)\left[ \dfrac{e^{i\langle \xi,x \rangle }}{\widehat{\phi}(\xi)}\sum_{j\in\mathbb{Z}^d}\widehat{\phi}(\xi+2\pi j /h)e^{i\langle x, 2\pi j /h \rangle}     \right]d\xi.
\end{align*}
The $j=0$ term is nothing but $f(x)$, hence
\[
f(x)-h^d T_h^{\sharp}f(x)=\frac{1}{(2\pi)^d}\int_{\mathbb{R}^d}\hat{f}(\xi)\left[ \dfrac{e^{i\langle \xi,x \rangle }}{\widehat{\phi}(\xi)}\sum_{j\neq 0}\widehat{\phi}(\xi+2\pi j /h)e^{i\langle x, 2\pi j /h \rangle}     \right]d\xi.
\]
Recalling that $\supp(\hat{f})\subset B$, we have the straightforward estimate
\[
\| f-h^d T_h^{\sharp}f \|_{L_\infty} \leq \dfrac{1}{(2\pi)^{d}} \left[ \sum_{j\neq 0} \| A_{\alpha,h,j}  \|_{L_\infty(B)}  \right] \| \hat{f} \|_{L_1},
\]
which is at most $C\|\widehat{f}\|_{L_1}g_\alpha(h)$ by (A3).
\end{proof}
Note that by the restriction $\alpha\geq\alpha_k$ above, we mean that for any $\phi\in(\phi_\alpha)_{\alpha\geq\alpha_k}$, the conclusion of the proposition holds.

We move now to truncate our approximant to put it in the space $\Phi_N$.  Define $T_{h}^{\flat}f$ via
\[
T_{h}^{\flat}f(x):= \sum_{j\in\mathbb{Z}^d\cap B_{h}}f_{\phi}(hj)\phi(x-hj),
\]
where $B_{h}$ is the ball of radius $h^{-2}$ centered at the origin.  This is a finite sum of $N\sim h^{-2d}$ terms, hence $T_{h}^{\flat}f\in \Phi_N$.  The next pair of propositions show that this truncated approximant performs quite well both in $L_\infty$ and pointwise.


\begin{proposition}\label{LEMUniformHBound}
Let $k\in\N$ and $f\in \schwartz_B$.  For $h<\pi/R$, there is a constant $C_{f,k}>0$, independent of $h$, so that for all $\alpha\geq\max\{\alpha_k,\alpha_{k+d}'\},$
\[
\| f- h^d T_{h}^{\flat}f  \|_{L_\infty}\leq C_{f,k} h^{k}.
\]
\end{proposition}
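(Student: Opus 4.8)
The plan is to insert the full (untruncated) approximant $T_h^\sharp f$ as an intermediary and then control the truncation tail separately. By the triangle inequality,
\[
\|f - h^d T_h^{\flat}f\|_{L_\infty} \le \|f - h^d T_h^{\sharp}f\|_{L_\infty} + h^d\|T_h^{\sharp}f - T_h^{\flat}f\|_{L_\infty}.
\]
The first summand is exactly what Proposition \ref{PROP1} governs: since $\alpha\ge\alpha_k$, condition (A3) gives $g_\alpha(h)\le Ch^k$, so $\|f - h^d T_h^{\sharp}f\|_{L_\infty}\le C\|\widehat f\|_{L_1}g_\alpha(h)\le C_{f,k}h^k$ (and $\|\widehat f\|_{L_1}<\infty$ because $\widehat f\in C_c^\infty$). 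Everything therefore reduces to bounding the tail
\[
E(x):=h^d\bigl(T_h^{\sharp}f - T_h^{\flat}f\bigr)(x)=h^d\sum_{|j|>h^{-2}}f_\phi(hj)\,\phi(x-hj),
\]
for which I must show $\|E\|_{L_\infty}\le C_{f,k}h^k$.

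The engine for the tail estimate is pointwise decay of the coefficients $f_\phi(hj)$. First I would establish that, for $\alpha\ge\alpha_{k+d}'$,
\[
|f_\phi(x)|\le C_{f,k+d}(1+|x|)^{-(k+d)},\qquad x\in\R^d.
\]
Since $f\in H_B$, its transform $\widehat f$ lies in $C_c^\infty$ with support in $\overline B$, so $\widehat{f_\phi}=\widehat f/\widehat\phi$ is also supported in $\overline B$; Leibniz's rule together with (A4) shows that $D^\gamma\widehat{f_\phi}\in L_2(B)$ for every $|\gamma|\le k+d$, hence $D^\gamma\widehat{f_\phi}\in L_1(B)$ since $B$ is bounded. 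Integrating by parts $|\gamma|$ times in the Fourier inversion integral for $f_\phi$ (the compact support and $L_2$-regularity of the derivatives of $\widehat{f_\phi}$ eliminate the boundary terms) yields $|x^\gamma f_\phi(x)|\le(2\pi)^{-d}\|D^\gamma\widehat{f_\phi}\|_{L_1(B)}\le C\|D^\gamma\widehat{f_\phi}\|_{L_2(B)}$, and collecting these over $|\gamma|\le k+d$ gives the stated decay. This is the step I expect to be the main obstacle: it is where (A4) and the bandlimitedness of $f$ enter, and one must verify carefully both that the relevant derivatives of $\widehat{f_\phi}$ are genuinely in $L_2(B)$ and that the integration by parts is legitimate with no boundary contribution.

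With the decay in hand the tail is routine. Using only the uniform bound $\|\phi\|_{L_\infty}\le C$ from (A2) (which makes the estimate independent of $x$) and noting that $|j|>h^{-2}$ is equivalent to $|hj|>h^{-1}$, one gets
\[
|E(x)|\le C\,h^d\sum_{|j|>h^{-2}}(1+|hj|)^{-(k+d)}.
\]
Comparing this lattice sum (spacing $h$) to the corresponding integral and using $k+d>d$,
\[
h^d\sum_{|j|>h^{-2}}(1+|hj|)^{-(k+d)}\le C\int_{|y|>\frac12 h^{-1}}(1+|y|)^{-(k+d)}\,dy\le C'\int_{\frac12 h^{-1}}^{\infty}r^{-(k+1)}\,dr= C''h^{k},
\]
valid for all sufficiently small $h$. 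Thus $\|E\|_{L_\infty}\le C_{f,k}h^k$, and combining this with the bound on $\|f-h^dT_h^{\sharp}f\|_{L_\infty}$ finishes the proof. The two halves of the argument require precisely $\alpha\ge\alpha_k$ and $\alpha\ge\alpha_{k+d}'$, which is the hypothesis $\alpha\ge\max\{\alpha_k,\alpha_{k+d}'\}$; note that the choice of decay order $k+d$ rather than $k$ is exactly what converts the $d$-dimensional tail sum into the gain of $h^k$.
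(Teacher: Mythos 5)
Your proposal is correct and follows essentially the same route as the paper's proof: the same triangle-inequality split through $T_h^\sharp f$, the same use of (A3) via Proposition \ref{PROP1} for the first term, the same derivation of the decay $|f_\phi(x)|\leq C_{f,k}(1+|x|)^{-k-d}$ from (A4), Leibniz's rule, and the compact support of $\widehat f$, and the same sum-to-integral comparison for the tail. The only difference is cosmetic (you keep the factor $h^d$ inside the Riemann-sum comparison, while the paper factors out $h^{-k-d}$ first), and your extra care about the legitimacy of the integration by parts is a reasonable elaboration of what the paper calls ``standard Fourier transform techniques.''
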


\begin{proof}
Note that the norm in question is at most $C\|\widehat{f}\|_{L_1}g_\alpha(h)+h^d\|T_h^\sharp f-T_h^\flat f\|_{L_\infty}$ by the triangle inequality and Proposition \ref{PROP1}, but the first term is at most $C_{f}h^k$ by (A3) and the assumption $\alpha\geq\alpha_k$.  Thus, it remains to show that $\|T_h^\sharp f-T_h^\flat f\|_{L_\infty}\leq Ch^{k-d}$.  By (A2), we have
\begin{equation}\label{EQSharpFlatDiff}
\|T_h^\sharp f-T_h^\flat f\|_{L_\infty}\leq C\dsum_{|j|\geq h^{-2}}|f_\phi(hj)|.
\end{equation}

Now if $\alpha\geq\alpha^{\prime}_{k+d}$, then for all multi-indices $|\gamma|\leq k+d$, $D^\gamma(1/\widehat{\phi})\in L_2(B)$.  Consequently, by standard Fourier transform techniques,
\[|f_\phi(x)|\leq \left(\sum_{|\gamma|\leq k+d}\|D^\gamma (\hat{f}/\hat{\phi} )  \|_{L_1}\right)(1+|x|)^{-k-d},\]
which is, by Leibniz's rule, the Cauchy--Schwarz inequality, the fact that $f\in\schwartz$, and (A4), majorized by
\[\dsum_{|\gamma|\leq k+d}\dsum_{\beta\leq\gamma}\|D^\beta\widehat{f}\|_{L_2}\left\|D^{\gamma-\beta}\left(\frac{1}{\widehat{\phi}}\right)\right\|_{L_2}(1+|x|)^{-k-d}\leq C_{f,k}(1+|x|)^{-k-d}.\]

Thus the series on the right side of \eqref{EQSharpFlatDiff} is at most
\[
C_{f,k}h^{-k-d}\sum_{|j|>h^{-2}}\frac{1}{|j|^{k+d}}\leq C_{f,k}h^{-k-d}\dint_{h^{-2}}^\infty r^{d-1}r^{-k-d}dr = C_{f,k}h^{k-d},
\]
concluding the proof.
\end{proof}


\begin{proposition}\label{prop_ptwise_bnd}
Let $k\in\N$ and $f\in \schwartz_B$.  Then for all $\alpha\geq\max\{\alpha_{2k},\alpha'_{2k+d},\alpha_k''\}$ and sufficiently small $h$, there is a constant $C$, independent of $h$, such that
$$|f(x)-h^dT_h^\flat f(x)|\leq Ch^k(1+|x|)^{-k}.$$
\end{proposition}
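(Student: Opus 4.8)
The plan is to establish the pointwise bound by splitting the error into the two pieces already controlled in Proposition~\ref{PROP1} and Proposition~\ref{LEMUniformHBound}, but now tracking the spatial decay in $|x|$ rather than settling for a uniform estimate. Writing
\[
f(x)-h^dT_h^\flat f(x) = \left(f(x)-h^dT_h^\sharp f(x)\right) + h^d\left(T_h^\sharp f(x)-T_h^\flat f(x)\right),
\]
I would handle the truncation tail (the second term) first, since it is the more delicate one: the factor $(1+|x|)^{-k}$ must come from exploiting that the points $hj$ omitted from $T_h^\flat$ all satisfy $|hj|\geq h^{-1}$, so that when $|x|$ is comparable to or smaller than this scale the summands $\phi(x-hj)$ are evaluated far from the origin and decay via (A5).

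For the truncation tail, I would use the decay $|f_\phi(hj)|\leq C_{f,k}(1+|hj|)^{-2k-d}$ obtained exactly as in the proof of Proposition~\ref{LEMUniformHBound} (now with $2k+d$ in place of $k+d$, which is why $\alpha\geq\alpha'_{2k+d}$ is required), combined with the kernel decay $|\phi(x-hj)|=O(|x-hj|^{-2k})$ from (A5), valid for $\alpha\geq\alpha_k''$. The sum $\sum_{|j|>h^{-2}}|f_\phi(hj)|\,|\phi(x-hj)|$ then needs to be shown to be $\le Ch^k(1+|x|)^{-k}$. The key geometric observation is that for $|j|>h^{-2}$ the point $hj$ is at distance at least $h^{-1}$ from the origin, so I would split into the regime $|x|\leq \tfrac{1}{2}h^{-1}$ (where $|x-hj|\gtrsim |hj|$, so the kernel decay alone, summed against the $f_\phi$ decay, yields a power of $h$ beating any fixed $(1+|x|)^{-k}$) and the regime $|x|>\tfrac{1}{2}h^{-1}$ (where $(1+|x|)^{-k}\gtrsim h^k$, so the already-established uniform bound $Ch^{2k-d}\cdot$(stuff) from the Proposition~\ref{LEMUniformHBound} computation can be converted into the desired form). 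Balancing the exponents is what forces the doubled smoothness indices $2k$, $2k+d$ in the hypotheses.

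For the first term, $f-h^dT_h^\sharp f$, I would revisit the integral representation from Proposition~\ref{PROP1}, namely
\[
f(x)-h^dT_h^\sharp f(x)=\frac{1}{(2\pi)^d}\int_{\R^d}\hat f(\xi)\,\frac{e^{i\langle\xi,x\rangle}}{\widehat\phi(\xi)}\sum_{j\neq0}\widehat\phi(\xi+2\pi j/h)e^{i\langle x,2\pi j/h\rangle}\,d\xi,
\]
and extract the $(1+|x|)^{-k}$ decay by integration by parts in $\xi$, exploiting that $\hat f$ is a compactly supported Schwartz function so that $(I-\Delta_\xi)^{k/2}$ applied under the integral produces a factor $(1+|x|^2)^{-k/2}$ while leaving an $L_1$-integrable amplitude whose size is still governed by the $A_{\alpha,h,j}$ ratios and hence by $g_\alpha(h)\leq Ch^{2k}$ under (A3) with $\alpha\geq\alpha_{2k}$. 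The $h^{2k}$ to spare absorbs the $h^{-d}$-type losses and leaves the required $h^k(1+|x|)^{-k}$.

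The main obstacle I anticipate is the truncation tail rather than the $T_h^\sharp$ term: getting simultaneously the correct power of $h$ and the correct spatial decay $(1+|x|)^{-k}$ requires the two-region split above, and one must verify that the kernel-decay estimate from (A5) interacts correctly with the shifted argument $x-hj$ when $x$ itself is large. Ensuring that the constant is genuinely independent of $h$ (while $C_{f,k}$ may depend on $f$ and $k$), and that ``sufficiently small $h$'' is all that is needed for the region split and the convergence of the tail integrals, is the bookkeeping that will require the most care.
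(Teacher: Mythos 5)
Your overall decomposition $f-h^dT_h^\flat f=(f-h^dT_h^\sharp f)+h^d(T_h^\sharp f-T_h^\flat f)$ leads you into two genuine difficulties that the paper's proof deliberately avoids. First, your treatment of $f-h^dT_h^\sharp f$ by integration by parts in $\xi$ requires differentiating the amplitude $\hat f(\xi)\,\widehat{\phi}(\xi+2\pi j/h)/\widehat{\phi}(\xi)$ up to order $k$ and then summing the resulting bounds over $j\neq0$. Conditions (A1)--(A6) give no control on derivatives of $A_{\alpha,h,j}$: (A3) is an $L_\infty$ bound on the ratios themselves, (A4) controls derivatives of $1/\widehat{\phi}$ only, and (A6) gives no derivative bounds on $\widehat{\phi}$ at the shifted arguments. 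Summable derivative control of the $A_{\alpha,h,j}$ is precisely the extra hypothesis (B4) that the paper has to introduce later for growing kernels; it is not available here, so this step does not go through under the stated assumptions. Second, your far-region treatment of the truncation tail rests on the claim that $(1+|x|)^{-k}\gtrsim h^k$ when $|x|>\tfrac12 h^{-1}$; the inequality is reversed there ($(1+|x|)^{-k}\lesssim h^k$), and for $|x|\gg h^{-1}$ the target $h^k(1+|x|)^{-k}$ is far smaller than the uniform bound $Ch^{2k}$, so the uniform estimate cannot be ``converted into the desired form'' in that regime.

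The paper's proof is structured so that neither issue arises. It splits on $|x|\leq 2/h$ versus $|x|>2/h$. In the near region, $(1+|x|)^{-k}\geq(h/3)^k$, so the uniform bound $Ch^{2k}$ of Proposition \ref{LEMUniformHBound} (with $2k$ in place of $k$, whence $\alpha\geq\max\{\alpha_{2k},\alpha'_{2k+d}\}$) already implies the pointwise estimate -- no spatial decay of $f-h^dT_h^\sharp f$ is ever needed. In the far region it abandons the $T_h^\sharp$ decomposition entirely and bounds $|f(x)|$ and $h^d|T_h^\flat f(x)|$ separately by $C(1+|x|)^{-2k}$: the first since $f\in\schwartz$, the second because every \emph{retained} center satisfies $|hj|\leq h^{-1}<|x|/2$, hence $|x-hj|\geq|x|/2$ and (A5) applies, while $h^d\sum_j|f_\phi(hj)|$ is bounded by the $f_\phi$-decay estimate. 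Note also that the relevant geometric fact concerns the retained centers $|j|\leq h^{-2}$ being close to the origin relative to a large $|x|$, not the omitted ones being far from a small $|x|$ as in your sketch. I would recommend restructuring your argument along these lines.
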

\begin{proof}
First notice that if $|x|\leq 2/h$, then $(1+|x|)^{-k}\geq(h/3)^k$, and so the desired inequality arises from Proposition \ref{LEMUniformHBound} (note this requires $\alpha\geq\max\{\alpha_{2k},\alpha'_{2k+d}\}$).

If $|x|>2/h$, $h^k(1+|x|)^{-k}\geq C|x/2|^{-2k}$, and so it suffices to show that $|f(x)-h^dT_h^\flat f(x)|\leq C(1+|x|)^{-2k}$ in this range.  Note that $|f(x)|\leq C(1+|x|)^{-2k}$ since it is in $\schwartz$.  On the other hand,
$$h^d|T_h^\flat f(x)|\leq h^d\dsum_{j\in\Z^d}|f_\phi(hj)|\sup_{|j|\leq h^{-2}}|\phi(x-hj)|.$$
Using the same bound as in the proof of Proposition \ref{LEMUniformHBound}, we see that the series above is bounded as long as $\alpha\geq\alpha'_{d+1}$.  Moreover, in this case, the series is bounded by a constant times $h$ to a positive power, in which case, we may say $h^d\sum_{j\in\Z^d}|f_\phi(hj)|\leq C$ for some constant independent of $h$.  Next, notice that $|x-hj|\geq|x|/2$, and so if $\alpha\geq\alpha''_k$, (A5) implies that $|\phi(x-hj)|=O(|x-hj|^{-2k})=O(|x|^{-2k})$ for sufficiently small $h$.
Therefore, $|h^dT_h^\flat f(x)|\leq C(1+|x|)^{-2k}$ as required.
\end{proof}

Now to make the dependence on $N$ more explicit, we can replace each occurrence of $h$ above with $N^{-1/(2d)}$ and rewrite $T_h^{\flat}f$ as 
\begin{equation}\label{EQTNdef}
T_N f(x):=N^{-1/2}T^{\flat}_{N^{-1/(2d)}}f(x).
\end{equation}

We denote by $N_0$ the smallest such $N$ that satisfies the requirements of Proposition \ref{prop_ptwise_bnd}.  

\section{Approximation in $L_p$, $1\leq p <\infty$ }\label{SECLp}

As suggested in Step 2 above, we use wavelets to bridge the gap between $\schwartz_B$ and $L_p$.  In order to use the results of Section \ref{SECCriteria}, we need a wavelet system whose generators are in $\schwartz_B$ where $B=B(0,R)$ contains the support of the Fourier transform of the mother wavelet.  
Fortunately, such wavelet systems are well known; for example the Meyer wavelet (see  \cite{Daubechies,M,G}) forms one such system.  So that the proofs extend easily to arbitrarily high dimensions (see especially Section \ref{SECGrowing}), we restrict to considering multivariate wavelet systems which are tensor products of univariate ones.  Since we use the same methodology as \cite[Section 3]{HR}, we only list the relevant details adapted to our set up.  Using the notation we find there, $I=c(I)+[0,\ell(I)]^d$ is a cube with corner $c(I)\in\mathbb{R}^d$ and side length $\ell(I)>0$.  Encumbent upon the wavelet structure, we make use of dyadic cubes, i.e.
\[
I\in \mathcal{D}:=\left\{ 2^m(n+[0,1]^d): m\in\mathbb{Z}, n\in\mathbb{Z}^d    \right\}.
\]
Let $\mathcal{D}_j$ be the subset of cubes with common edge-length $2^j$.  We will denote by $\psi_I$ the the natural affine change of variables:
\[
\psi_I(x):=\psi((x-c(I))/\ell(I)).
\]
If $\psi\in \schwartz_B$, then since (A1) is satisfied, we have the following theorem as a direct result of Proposition \ref{prop_ptwise_bnd}.
 
 \begin{theorem}\label{thm_N_bnd}
 Let $k\in\N$, $\psi\in \schwartz_B$, $N>N_0$ and let $I\in\mathcal{D}$.  There exists $\alpha_K \in A$ such that for all $\alpha\geq\alpha_K$, there is a constant $C$, independent of $N$, such that
\[|\psi_I(x)-(T_N \psi)_I(x)|\leq CN^{-k/d}\left(1+\dfrac{|x-c(I)|}{\ell(I)}\right)^{-2k}.\]
 \end{theorem}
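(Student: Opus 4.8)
The plan is to deduce the estimate directly from Proposition \ref{prop_ptwise_bnd}, treating the theorem as a corollary obtained by three moves: (i) running that proposition with parameter $2k$ in place of $k$, (ii) substituting the reparametrization $h=N^{-1/(2d)}$ from \eqref{EQTNdef}, and (iii) transporting the resulting bound through the affine change of variables $x\mapsto(x-c(I))/\ell(I)$.

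First I would note that, since $\psi\in H_B$, Proposition \ref{prop_ptwise_bnd} applies to $\psi$ itself. Applying it with $2k$ in place of $k$ — which forces the requirement $\alpha\geq\alpha_K:=\max\{\alpha_{4k},\alpha'_{4k+d},\alpha''_{2k}\}$ together with $h$ small enough — yields
\[|\psi(x)-h^dT_h^\flat\psi(x)|\leq Ch^{2k}(1+|x|)^{-2k}.\]
I then set $h=N^{-1/(2d)}$. With this choice $h^d=N^{-1/2}$, so that $h^dT_h^\flat\psi=N^{-1/2}T^\flat_{N^{-1/(2d)}}\psi=T_N\psi$ by \eqref{EQTNdef}, while $h^{2k}=N^{-k/d}$. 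This already proves the claim for the unit cube $I=[0,1]^d$ (where $c(I)=0$, $\ell(I)=1$, so $\psi_I=\psi$ and $(T_N\psi)_I=T_N\psi$), namely $|\psi(x)-T_N\psi(x)|\leq CN^{-k/d}(1+|x|)^{-2k}$; the hypothesis $N>N_0$ is exactly what guarantees both $h<\pi/R$ and the smallness of $h$ needed above.

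Finally, to reach a general dyadic cube $I\in\mathcal{D}$, I would evaluate the previous inequality at the point $(x-c(I))/\ell(I)$. Since $\psi_I(x)=\psi((x-c(I))/\ell(I))$ and, by definition, $(T_N\psi)_I(x)=(T_N\psi)((x-c(I))/\ell(I))$, this substitution gives
\[|\psi_I(x)-(T_N\psi)_I(x)|\leq CN^{-k/d}\left(1+\frac{|x-c(I)|}{\ell(I)}\right)^{-2k},\]
with the constant $C$ unaltered by the affine substitution, hence independent of both $I$ and $N$. The role of (A1) is precisely to guarantee that $(T_N\psi)_I$, being a dilate-and-translate of the $N$-term element $T_N\psi\in\Phi_N$, is again a legitimate $N$-term member of $\Phi_N$; it does not enter the numerical estimate.

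I do not anticipate a genuine obstacle, as the statement is essentially a rescaling of Proposition \ref{prop_ptwise_bnd}. The only points demanding care are the bookkeeping of parameters — doubling $k$ so as to produce the decay exponent $-2k$ while the factor $h^{2k}$ simultaneously converts into $N^{-k/d}$ — and the observation that the pointwise bound is affine-invariant, so that the single estimate on $[0,1]^d$ transfers verbatim, with the same constant, to every $I\in\mathcal{D}$.
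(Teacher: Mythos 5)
Your proposal is correct and matches the paper's intended argument: the paper presents the theorem as a direct consequence of Proposition \ref{prop_ptwise_bnd} together with (A1), and your three steps (run the proposition with $2k$ in place of $k$, substitute $h=N^{-1/(2d)}$ so that $h^{2k}=N^{-k/d}$ and $h^dT_h^\flat\psi=T_N\psi$, then transport the bound through the affine change of variables) are exactly the bookkeeping the paper leaves implicit. Your identification of $\alpha_K=\max\{\alpha_{4k},\alpha'_{4k+d},\alpha''_{2k}\}$ and of the role of $N>N_0$ and of (A1) is also consistent with the paper.
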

 In particular, Theorem \ref{thm_N_bnd} says that to approximate any element of the wavelet system, it suffices to consider the approximant of the mother wavelet under the same affine change of variables.  
 
 We will assume throughout that we have a wavelet system $\Psi$ formed from a mother wavelet $\psi$ as described above.  This allows us to use the results of the previous section because we have (for smooth enough $f$)
 \[
 f=\sum_{I\in\mathcal{D}}f_I\psi_I.
 \]
 To approximate $f$ with $N$ terms, we begin by assigning to each cube $I$ a cost $c_I$, and subsequently a budget
  \begin{equation}\label{EQBudget}
 N_I:= \left\{ \begin{aligned} \lfloor  c_I  \rfloor, & \quad \lfloor c_I \rfloor \geq N_0,\\   0,   & \quad \text{otherwise,}  \end{aligned}     \right.
 \end{equation}
 where $N_0$ is the same as in Theorem \ref{thm_N_bnd} and depends only on the wavelet system.  Requiring that $\sum_IN_I\leq N$, we then simply approximate $f_I\psi_I$ with $N_I$ terms, and set \begin{equation}\label{EQBestApproximant}
 S_{f,N}:=\sum_{I\in\mathcal{D}}f_IT_{N_I}\psi_I
 \end{equation}
 to be our $N$--term approximant in $\Phi_N$ which will exhibit the desired rate of convergence.  The cost distribution depends on several auxiliary quantities, which we define below.
\begin{definition}
Given $s,q>0$, define the maximal function $M_{s,q}f$ via
\[
M_{s,q}f(x):=\left( \sum_{I\in\mathcal{D}}|I|^{-sq/d}|f_I|^q \chi_I(x) \right)^{1/q}.
\]
For a fixed dyadic cube $I$, we define a partial maximal function by
\[
M_{s,q,I}f(x):=\left( \sum_{I\subset I'\in\mathcal{D}}|I'|^{-sq/d}|f_{I'}|^q \chi_{I'}(x) \right)^{1/q}.
\]
Now given $\tau,s,q >0$, we define the \emph{Triebel--Lizorkin space} $F_{\tau,q}^{s}$ via
$$F_{\tau,q}^s:=\{f:|f|_{F_{\tau,q}^s}:= \| M_{s,q}f  \|_{L_\tau(\R^d)}<\infty\},$$ where $|f|_{F_{\tau,q}^{s}}$ is a  quasi-seminorm.
\end{definition}
For the reader unfamiliar with the Triebel--Lizorkin spaces, it should be mentioned that they may be viewed from many different equivalent vantage points.  While the definition above is in terms of mixed sequence and function space norms of a maximal function, they may also be defined via a Littlewood--Paley decomposition, or additionally in terms of a wavelet system; for more details, consult Section 3 of \cite{Kalton}.
Returning to the problem at hand, we are now in position to define the cost distribution which will yield the approximant giving optimal approximation orders.  Note this cost distribution may also be found in \cite{DR,HR}.

\begin{definition}
Let $s>0$ and $1\leq p<\infty$.  Define $\tau$ and $q$ by $1/\tau:=1/p+s/d$ and $1/q:=1+s/d$.  Let $f\in F_{\tau,q}^{s}$, with the wavelet expansion $\sum_{I\in\mathcal{D}}f_I\psi_I$.  The \emph{cost of a dyadic cube} $I\in \mathcal{D}$ is
\[
c_{I}:=|f|^{-\tau}_{F_{\tau,q}^{s}}m_{s,q,I}^{\tau-q}|f_I|^{q}|I|^{q}N,
\]
where $m_{s,q,I}:=\sup_{x\in\mathbb{R}^d}M_{s,q,I}f(x)$. 
\end{definition}
As shown in \cite{HR}, the sum of all costs is bounded by $N$.
Given $(N_I)$, we can use Theorem \ref{thm_N_bnd} to see that we are estimating $f_I\psi_I$ with $N_I$ translates of the kernel $\phi_\alpha$.  Additionally, we can see that for $\alpha\geq\alpha_K$:
\begin{align*}
|\psi_I(x)|&\leq C_{k,d}\min\{1,N^{-k/d}\}\left(1+\dfrac{\text{dist}(x,I)}{\ell(I)}    \right)^{-2k}:=R_I(x);\\
|R_I(x)| &\leq C_{k,d}\min\{1,c_I^{-k/d}    \}\left(1+\dfrac{\text{dist}(x,I)}{\ell(I)}    \right)^{-2k}.
\end{align*}
The proof of Theorem \ref{THMLpMain} relies on the following lemma, whose proof may be found in \cite{HR}.
\begin{lemma}\label{lem8_equiv}
Let $1\leq p <\infty$ and $\alpha\geq\alpha_K$, then
\[
\left\| \sum_{I\in\mathcal{D}} |f_I|R_I  \right\|_{L_p} \leq C_{k,d}\left\| \sum_{I\in\mathcal{D}} \min\left\{1,c_I^{-k/d}\right\}|f_I|\chi_I  \right\|_{L_p}.
\]
\end{lemma}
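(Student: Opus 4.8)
The plan is to reduce the estimate to a single scale-free \emph{dilation inequality} by expanding the polynomial tail as a layer-cake superposition of indicators of dilated cubes, and then to prove that dilation inequality by duality against the Hardy--Littlewood maximal operator $\mathcal M$. Writing $a_I:=\min\{1,c_I^{-k/d}\}|f_I|$ and $g:=\sum_{I\in\mathcal D}a_I\chi_I$, and absorbing the constant $C_{k,d}$ into $R_I$, it suffices to show
\[
\Big\|\sum_{I\in\mathcal D}a_I\big(1+\tfrac{\text{dist}(\cdot,I)}{\ell(I)}\big)^{-2k}\Big\|_{L_p}\le C\|g\|_{L_p},
\]
where we may assume $2k>d$, since only large $k$ is used downstream. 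The genuine difficulty is the interaction across scales: the obvious route of dominating the scale-$2^j$ piece by $\mathcal M g_j$ (with $g_j$ the part of $g$ carried by cubes of side $2^j$) and summing in $j$ is fatally lossy, because the vector-valued maximal inequality fails at the $\ell^1$ exponent that $\sum_I a_I\chi_I$ represents; a direct triangle inequality over scales likewise loses a factor growing with the number of active scales. The duality argument below is exactly what recombines the scales without loss.

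First I would record the layer-cake identity $(1+u)^{-2k}=\int_0^\infty\chi_{\{u<t\}}\,2k(1+t)^{-2k-1}\,dt$. Applying it with $u=\text{dist}(x,I)/\ell(I)$, and noting that the $t\ell(I)$-neighborhood $\{x:\text{dist}(x,I)<t\ell(I)\}$ is contained in the concentric dilate $\lambda_t I$ with $\lambda_t:=1+2t$, gives the pointwise bound $\big(1+\text{dist}(x,I)/\ell(I)\big)^{-2k}\le\int_0^\infty\chi_{\lambda_t I}(x)\,2k(1+t)^{-2k-1}\,dt$. Summing against $a_I$ and applying Minkowski's integral inequality then yields
\[
\Big\|\sum_{I\in\mathcal D}a_I\big(1+\tfrac{\text{dist}(\cdot,I)}{\ell(I)}\big)^{-2k}\Big\|_{L_p}\le\int_0^\infty\Big\|\sum_{I\in\mathcal D}a_I\chi_{\lambda_t I}\Big\|_{L_p}\,2k(1+t)^{-2k-1}\,dt,
\]
so everything reduces to the dilation inequality $\big\|\sum_I a_I\chi_{\lambda I}\big\|_{L_p}\le C\lambda^d\|g\|_{L_p}$, whose power of $\lambda$ is integrable against $(1+t)^{-2k-1}$ once $2k>d$.

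The dilation inequality is where I expect the real work, and I would prove it by duality. For nonnegative $\phi$ with $\|\phi\|_{L_{p'}}\le1$, expand $\int\big(\sum_I a_I\chi_{\lambda I}\big)\phi=\sum_{I\in\mathcal D}a_I\int_{\lambda I}\phi$. For each cube $I$ and each $z\in I\subset\lambda I$, the average of $\phi$ over the cube $\lambda I$ is at most $\mathcal M\phi(z)$, so $\int_{\lambda I}\phi\le\lambda^d|I|\,\mathcal M\phi(z)$ for every $z\in I$; averaging in $z$ gives $\int_{\lambda I}\phi\le\lambda^d\int_I\mathcal M\phi$. The crucial point is that summing this over all of $\mathcal D$ \emph{recombines every scale into the single function} $g$: one obtains $\sum_I a_I\int_{\lambda I}\phi\le\lambda^d\int\big(\sum_I a_I\chi_I\big)\mathcal M\phi=\lambda^d\int g\,\mathcal M\phi\le\lambda^d\|g\|_{L_p}\|\mathcal M\phi\|_{L_{p'}}$, and the Hardy--Littlewood maximal theorem (for $p'>1$, or the trivial $L_\infty$ bound when $p=1$) gives $\|\mathcal M\phi\|_{L_{p'}}\le C$. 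Taking the supremum over $\phi$ proves $\big\|\sum_I a_I\chi_{\lambda I}\big\|_{L_p}\le C\lambda^d\|g\|_{L_p}$. Substituting back, the outer integral $\int_0^\infty(1+2t)^d\,2k(1+t)^{-2k-1}\,dt$ converges precisely because $2k>d$, and the lemma follows.
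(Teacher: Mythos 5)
Your argument is correct, and it takes a genuinely different route from the one the paper relies on. The paper does not prove this lemma itself: it defers to Lemma 8 of Hangelbroek--Ron, and, as the authors note at the start of Section 5, that proof runs through the Fefferman--Stein vector-valued maximal inequality (in the standard way, one dominates $(1+\text{dist}(x,I)/\ell(I))^{-2k}$ pointwise by $(\mathcal{M}\chi_I(x))^{2k/d}$ and applies Fefferman--Stein at the exponent $q=2k/d>1$, which is what recombines the scales there). You instead expand the polynomial tail as a layer-cake average of indicators of dilated cubes and reduce everything to the dilation inequality $\|\sum_I a_I\chi_{\lambda I}\|_{L_p}\le C\lambda^d\|\sum_I a_I\chi_I\|_{L_p}$, which you prove by duality against the scalar Hardy--Littlewood maximal operator; in your version the scales are recombined by the identity $\sum_I a_I\int_I\mathcal{M}\phi=\int(\sum_I a_I\chi_I)\mathcal{M}\phi$, and your opening remark about why the naive scale-by-scale estimate is lossy (the $\ell^1$-valued maximal inequality fails) is accurate. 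Each step checks out: the layer-cake identity, the containment of the $t\ell(I)$-neighborhood of $I$ in the concentric $(1+2t)$-dilate, Minkowski's integral inequality, and the duality step, with the trivial $L_\infty$ bound on $\mathcal{M}$ covering $p=1$. What your route buys is a more elementary and self-contained proof, needing only the scalar maximal theorem rather than its vector-valued extension; both routes require $2k>d$ --- a hypothesis the lemma's statement leaves implicit, but which you correctly flag and which is in any case necessary for the left-hand side to be finite when $p=1$ --- and both break at $p=\infty$ because $\mathcal{M}$ is unbounded on $L_1$, consistent with the paper's remarks in Section 5.
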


\begin{theorem}\label{THMLpMain}
Given $s>0$ and $1\leq p <\infty$, there exists a constant $C_{s,p,d}>0$ such that for $f\in F_{\tau,q}^{s}$, with $1/\tau = 1/p + s/d$ and $1/q=1+s/d$, there is a function  $S_{f,N}\in \Phi_N$ so that
\[
\| f-S_{f,N} \|_{L_p}\leq C_{s,p,d}N^{-s/d}|f|_{F_{\tau,q}^{s}}.
\]
In particular, we may take $S_{f,N}$ as in \eqref{EQBestApproximant}.
\end{theorem}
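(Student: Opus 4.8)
The plan is to bound the error pointwise using Theorem \ref{thm_N_bnd}, reduce to a sum of indicator functions via Lemma \ref{lem8_equiv}, and then convert that sum into the Triebel--Lizorkin quasi-seminorm by substituting the cost distribution. First I would write, using $f=\sum_{I\in\mathcal D}f_I\psi_I$ and the definition \eqref{EQBestApproximant} of $S_{f,N}=\sum_I f_IT_{N_I}\psi_I$,
\[
|f(x)-S_{f,N}(x)|\leq\sum_{I\in\mathcal D}|f_I|\,\bigl|\psi_I(x)-(T_{N_I}\psi)_I(x)\bigr|.
\]
For cubes with $N_I=\lfloor c_I\rfloor\geq N_0$, Theorem \ref{thm_N_bnd} applied with budget $N_I$ (together with $N_I\geq c_I/2$, valid since $c_I\geq 1$) bounds the $I$th term by $C|f_I|\,c_I^{-k/d}(1+\text{dist}(x,I)/\ell(I))^{-2k}$; for cubes with $N_I=0$ the term is simply $|f_I|\,|\psi_I(x)|$, which is controlled by the same expression because $\psi\in\schwartz$ and $c_I<N_0+1$ there, so the factor $\min\{1,c_I^{-k/d}\}$ is bounded below by a positive constant that can be absorbed. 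In both cases the per-term error is at most $|f_I|R_I(x)$ with $R_I$ as defined before Lemma \ref{lem8_equiv}. Summing and applying Lemma \ref{lem8_equiv} reduces the theorem to the estimate
\[
\left\|\sum_{I\in\mathcal D}\min\{1,c_I^{-k/d}\}\,|f_I|\,\chi_I\right\|_{L_p}\leq C\,N^{-s/d}\,|f|_{F_{\tau,q}^{s}}.
\]

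Second, I would eliminate the budget parameter by the elementary inequality $\min\{1,t^{-k/d}\}\leq t^{-s/d}$, which holds for every $t>0$ as soon as $k\geq s$. Choosing $k$ large (large enough both for this and for the decay needed to invoke Lemma \ref{lem8_equiv}, with $\alpha_K$ fixed accordingly), the left-hand side above is at most $\left\|\sum_I c_I^{-s/d}|f_I|\chi_I\right\|_{L_p}$. Now I substitute $c_I=|f|_{F_{\tau,q}^{s}}^{-\tau}m_{s,q,I}^{\tau-q}|f_I|^q|I|^qN$ and factor out all quantities independent of $I$. Using the defining relations $1/q=1+s/d$ and $1/\tau=1/p+s/d$, one checks that $1-qs/d=q$, so that $c_I^{-s/d}|f_I|=N^{-s/d}|f|_{F_{\tau,q}^{s}}^{\tau s/d}\,m_{s,q,I}^{-(\tau-q)s/d}\,|I|^{-sq/d}|f_I|^q$. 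Pulling the constant factor out of the norm, the theorem reduces to the $N$--free inequality
\[
\left\|\sum_{I\in\mathcal D}m_{s,q,I}^{-(\tau-q)s/d}\,|I|^{-sq/d}|f_I|^q\,\chi_I\right\|_{L_p}\leq C\,|f|_{F_{\tau,q}^{s}}^{\,1-\tau s/d},
\]
and a homogeneity count (both sides have degree $1-\tau s/d$ in $f$) confirms that the exponents are consistent.

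This last display is the crux, and I expect it to be the main obstacle. Its integrand already contains $\sum_I|I|^{-sq/d}|f_I|^q\chi_I=(M_{s,q}f)^q$, so the entire difficulty lies in controlling the weights $m_{s,q,I}^{-(\tau-q)s/d}$, which depend on the \emph{partial} maximal functions $M_{s,q,I}f$ and hence on the nesting of each dyadic cube inside all larger ones. Here I would follow the argument of \cite[Theorem 9]{HR}: exploit that $m_{s,q,I}$ is monotone in $\ell(I)$ along any nested chain, compare the partial maximal functions to the global maximal function $M_{s,q}f$, and organize the sum over the level sets of $M_{s,q}f$, so that the $L_p$ norm collapses onto $\|M_{s,q}f\|_{L_\tau}=|f|_{F_{\tau,q}^{s}}$ through the relation $1/\tau=1/p+s/d$. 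The remaining exponent $\tau-q\geq 0$ (which holds precisely because $p\geq 1$) is what makes the weights well defined and the bookkeeping close.

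Finally, I would note that the constraint $\sum_I c_I\leq N$ is not used in the error bound itself, but only to guarantee that $\sum_I N_I\leq N$ via \eqref{EQBudget}, so that $S_{f,N}$ is genuinely an $N$--term element of $\Phi_N$; combined with the displayed estimates this yields the asserted bound with $S_{f,N}$ as in \eqref{EQBestApproximant}.
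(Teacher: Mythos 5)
Your proposal is correct and follows essentially the same route as the paper, which simply defers to the argument of \cite[Theorem 9]{HR} \emph{mutatis mutandis}: you use Theorem \ref{thm_N_bnd} and Lemma \ref{lem8_equiv} to reduce to the weighted indicator sum, substitute the cost distribution (your exponent arithmetic, e.g.\ $1-qs/d=q$ and the homogeneity count, checks out), and defer the final maximal-function estimate to the Fefferman--Stein-based argument in \cite{HR}, exactly as the paper does. In fact your write-up supplies more of the intermediate bookkeeping than the paper itself records.
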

\begin{proof}
The proof is the same as the one given in \cite[Theorem 9]{HR}, \emph{mutatis mudandis}.
\end{proof}

While the proof of the main theorem is essentially the same as in \cite{HR}, it depends on the propositions in Section \ref{SECCriteria} which, on account of conditions (A1)--(A6), allow for the use of kernels that have only finite smoothness.

\section{Approximation in $L_\infty$ }\label{SECLinfty}

It should be noted that the proof of Lemma \ref{lem8_equiv} relies upon the Fefferman--Stein inequality, which is not valid in $L_\infty$.  Thus, for $L_\infty$ convergence phenomena, we restrict our attentions to Besov spaces, which are alternative (slightly smaller) smoothness spaces than the Triebel--Lizorkin spaces which do not rely on maximal functions in their definitions.

Here, we list the relevant results suited to our set up.  It closely matches that of \cite[Section 4]{HR}, thus the proofs are omitted.  
\begin{definition}
For $\tau=d/s\in (0,\infty)$ and $q\in (0,\infty)$, the {\em Besov space} $B^{s}_{\tau,p}$ is defined by the finiteness of the quasi-seminorm
\[|f|_{B^{s}_{\tau,q}}:=\left\| k\mapsto \left( \sum_{I\in\mathcal{D}_k}|f_I|^\tau   \right)^{1/\tau}     \right\|_{\ell_q},\] that is
\[
B^{s}_{\tau,q}:=\left\{ f\in L_\tau : |f|_{B^{s}_{\tau,q}}<\infty   \right\}.
\]
\end{definition}

\begin{lemma}
Let $k>d$ and $\alpha>\alpha_K$.  Suppose that $(a_I)_{I\in D_j}$ is a finitely supported sequence of coefficients; then
\[
\left\| \sum_{I\in\ \mathcal{D}_j}a_I\psi_I - \sum_{I\in\ \mathcal{D}_j}a_I[T_{N_I}\psi_I] \right\|_{L_\infty} \leq C_{k,d} \sup_{I\in \mathcal{D}_j}\left| a_I N_I^{-k/d} \right|.
\]
\end{lemma}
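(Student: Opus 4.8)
The plan is to pass to a pointwise estimate, apply the single-kernel bound of Theorem~\ref{thm_N_bnd} term by term, pull the supremum out of the right-hand side, and then control the resulting lattice sum uniformly in $x$ using the fact that every cube in $\mathcal{D}_j$ has the common side length $2^j$.

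First I would fix $x\in\R^d$ and use the triangle inequality to write
\[
\left| \sum_{I\in\mathcal{D}_j}a_I\bigl(\psi_I(x)-T_{N_I}\psi_I(x)\bigr) \right| \leq \sum_{I\in\mathcal{D}_j}|a_I|\,\bigl|\psi_I(x)-T_{N_I}\psi_I(x)\bigr|.
\]
Since $\alpha>\alpha_K$, Theorem~\ref{thm_N_bnd} applies to each cube with $N=N_I$ (for those $I$ with $N_I>N_0$; if some $N_I=0$ then $\sup_{I}|a_IN_I^{-k/d}|=\infty$ and the inequality is trivial), giving
\[
\bigl|\psi_I(x)-T_{N_I}\psi_I(x)\bigr| \leq C\,N_I^{-k/d}\left(1+\frac{|x-c(I)|}{\ell(I)}\right)^{-2k}.
\]
I would then estimate $|a_I|N_I^{-k/d}\leq \sup_{I'\in\mathcal{D}_j}|a_{I'}N_{I'}^{-k/d}|$ and factor this supremum out of the sum, so that the whole claim reduces to bounding the geometric sum $\sum_{I\in\mathcal{D}_j}\bigl(1+|x-c(I)|/\ell(I)\bigr)^{-2k}$ by a constant depending only on $k$ and $d$.

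The main (and essentially only) obstacle is to show that this last sum is bounded uniformly in both $x$ and $j$. Here I would exploit the single-scale structure: for $I\in\mathcal{D}_j$ one has $\ell(I)=2^j$ and the corners $c(I)$ run exactly over the lattice $2^j\Z^d$, so the substitution $y=2^{-j}x$ converts the sum into $\sum_{n\in\Z^d}\bigl(1+|y-n|\bigr)^{-2k}$. I would bound this uniformly over $y\in\R^d$ by separating the $O(1)$ lattice points with $|y-n|\leq 1$ (each contributing at most $1$) from the tail, which is dominated by $\int_{\R^d}(1+|u|)^{-2k}\,du$; this integral converges precisely because $2k>d$, a condition guaranteed by the hypothesis $k>d$. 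This produces a bound $C_{k,d}$ independent of $x$ and $j$.

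Combining the three steps and taking the supremum over $x\in\R^d$ then yields
\[
\left\| \sum_{I\in\mathcal{D}_j}a_I\psi_I - \sum_{I\in\mathcal{D}_j}a_IT_{N_I}\psi_I \right\|_{L_\infty} \leq C_{k,d}\sup_{I\in\mathcal{D}_j}\bigl|a_IN_I^{-k/d}\bigr|,
\]
as required. The delicate point to keep in mind is that the lattice-sum bound must hold uniformly in the shift $y$, which is exactly why one compares against the convergent integral rather than a single fixed series; everything else is a routine consequence of the translation-invariant, single-scale geometry of $\mathcal{D}_j$.
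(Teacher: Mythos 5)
Your proposal is correct and follows exactly the argument the paper intends: the paper omits the proof, deferring to \cite[Section 4]{HR}, and the proof there is precisely this combination of the pointwise bound of Theorem \ref{thm_N_bnd}, extraction of $\sup_{I}|a_IN_I^{-k/d}|$, and the uniform bound on the single-scale lattice sum $\sum_{n\in\Z^d}(1+|y-n|)^{-2k}$, which converges since $2k>d$. The only cosmetic difference is that the paper packages the case $N_I=0$ via the factor $\min\{1,N_I^{-k/d}\}$ in the definition of $R_I$ rather than declaring the right-hand side infinite, but this does not affect the validity of your argument.
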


\begin{theorem}
Given $s>0$, there exists a constant $C_{s,d}>0$ such that for $f\in B_{\tau,q}^{s}$, with $1/\tau = s/d$ and $1/q=1+s/d$, there is a function  $S_{f,N}\in \Phi_N$ so that
\[
\| f-S_{f,N} \|_{L_\infty}\leq C_{s,d}N^{-s/d}|f|_{B_{\tau,q}^{s}}.
\]
\end{theorem}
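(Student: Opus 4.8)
The plan is to adapt \cite[Section 4]{HR} and estimate the error one dyadic scale at a time. With $f=\sum_{I\in\mathcal{D}}f_I\psi_I$ and $S_{f,N}=\sum_I f_I T_{N_I}\psi_I$ as in \eqref{EQBestApproximant}, the triangle inequality gives
\[
\|f-S_{f,N}\|_{L_\infty}\leq\sum_{j\in\Z}\left\|\sum_{I\in\mathcal{D}_j}f_I\bigl(\psi_I-T_{N_I}\psi_I\bigr)\right\|_{L_\infty}.
\]
The decisive structural point is that the Fefferman--Stein inequality underlying Lemma \ref{lem8_equiv} fails in $L_\infty$, so I cannot simply re-run the Triebel--Lizorkin argument of Theorem \ref{THMLpMain}; instead the $\ell_q(\ell_\tau)$ shape of $|f|_{B_{\tau,q}^s}$ lets me treat each scale in isolation and recombine through the outer $\ell_q$ sum.

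At a fixed scale $j$ I would prescribe a scale-error target $\eta_j>0$, keep only those cubes $I\in\mathcal{D}_j$ with $|f_I|>\eta_j$, and assign to each a budget $N_I\sim(|f_I|/\eta_j)^{d/k}$, so that $|f_I|N_I^{-k/d}\sim\eta_j$, dropping the remaining cubes. The single-scale lemma stated above then bounds the kept contribution by $C\sup_{I\,\mathrm{kept}}|f_I|N_I^{-k/d}\leq C\eta_j$, while the dropped cubes are controlled by the fixed-scale estimate $\|\sum_{I\in\mathcal{D}_j}a_I\psi_I\|_{L_\infty}\leq C\sup_I|a_I|$ (bounded overlap and decay of $\psi$ at one scale), which is $\leq C\eta_j$ by the thresholding. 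Hence each scale contributes $O(\eta_j)$, and the whole problem reduces to choosing $(\eta_j)$ with $\sum_j\eta_j\lesssim N^{-s/d}|f|_{B_{\tau,q}^s}$ subject to the constraint $\sum_I N_I\leq N$.

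Writing $\sigma_j:=(\sum_{I\in\mathcal{D}_j}|f_I|^\tau)^{1/\tau}$, so that $|f|_{B_{\tau,q}^s}=\|(\sigma_j)\|_{\ell_q}$, the budget spent at scale $j$ is $\sum_{I:\,|f_I|>\eta_j}(|f_I|/\eta_j)^{d/k}\leq\eta_j^{-\tau}\sigma_j^{\tau}=(\sigma_j/\eta_j)^{\tau}$. This step needs $d/k<\tau$, i.e. $k>s$, which together with the lemma's hypothesis $k>d$ forces me to fix some $k>\max\{d,s\}$ and take $\alpha\geq\alpha_K$ for that $k$ in Theorem \ref{thm_N_bnd}. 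Minimizing $\sum_j\eta_j$ against $\sum_j(\sigma_j/\eta_j)^{\tau}\leq N$ is a routine Lagrange computation whose optimizer is $\eta_j\propto\sigma_j^{q}$, the exponent $q=\tau/(1+\tau)$ appearing precisely because $1/q=1+1/\tau=1+s/d$. Substituting back and using $1/\tau=s/d$ collapses the estimate to $\sum_j\eta_j=N^{-1/\tau}|f|_{B_{\tau,q}^s}=N^{-s/d}|f|_{B_{\tau,q}^s}$, which is the claim; the integer rounding and threshold $N_0$ in \eqref{EQBudget} are absorbed exactly as in the $L_p$ case.

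The main obstacle is the scale-adapted budget allocation itself. Because $L_\infty$ rules out the maximal-function machinery, one cannot use a single global threshold on the $|f_I|$: doing so leaves both a high-energy and a low-energy residual whose sums decay only like $N^{-s^2/(d(d+s))}$, strictly slower than $N^{-s/d}$ since $s/(d+s)<1$. The scale-dependent thresholds $\eta_j\propto\sigma_j^{q}$ are exactly what repair this, and the real content of the proof is verifying that the two exponent identities $1/\tau=s/d$ and $1/q=1+s/d$ conspire to produce precisely the rate $N^{-s/d}$ while keeping $\sum_I N_I\leq N$.
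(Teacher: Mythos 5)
Your proposal is correct and follows essentially the same route the paper takes, namely the argument of Hangelbroek--Ron, Section 4, which the paper explicitly defers to: the single-scale $L_\infty$ lemma combined with a per-scale budget allocation. Your Lagrange-multiplier derivation of $\eta_j\propto\sigma_j^{q}$ (equivalently, per-cube budgets $N_I\sim(|f_I|/\eta_j)^{d/k}$ with $k>\max\{d,s\}$) reconstructs precisely the Besov cost distribution of \cite[Section 4.1]{HR} that the paper cites, and the exponent bookkeeping $1/q=1+1/\tau$, $1/\tau=s/d$ checks out.
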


For the cost distribution for the Besov space case that produces the function $S_{f,N}$, see \cite[Section 4.1]{HR}.

\section{Examples}\label{SECExamples}

In this section, we provide numerous examples of kernels which satisfy the conditions prescribed above whose associated $N$--term approximation spaces $\Phi_N$ exhibit optimal rates for nonlinear approximation of smooth functions.

\subsection{Gaussians}\label{SECGaussian}

This example is due to Hangelbroek and Ron and can be found in \cite{HR}.  It served as the inspiration for our analysis.

Here we will let $\phi_{\alpha}$ be the Gaussian:
\[
\phi_\alpha(x):= e^{-|x/\alpha|^2},
\]
whose Fourier transform is given by
\[
\widehat{\phi_\alpha}(\xi)=(\alpha\sqrt{\pi})^d\phi_{2/\alpha}(\xi).
\]
We check conditions (A1)--(A6) below, which are straightforward since the Gaussian is nice.

Condition (A1) is clear from the definition of $\Phi_N$ and the fact that $\phi_\alpha(x/\beta)=\phi_{\alpha\beta}(x)$. The constant in (A2) can be taken to be $\phi_\alpha(0)=1$.  The quantity in (A3) is calculated in \cite{HR} as follows:
\[
\sum_{j\neq 0}\left| \dfrac{\widehat{\phi_\alpha}(\xi+2\pi j/h)}{\widehat{\phi_\alpha}(\xi)}\right| \leq C \widehat{\phi_\alpha}(2(\pi/h-R))\leq C_{\alpha,R}e^{c/h^2},
\] 
where the constant $c>0$ depends on $\alpha$ and $R$.  We may take our parameter $\alpha_k=0$.

Condition (A4) is checked easily by noting that $1/\widehat{\phi_\alpha}=e^\frac{\alpha}{4 |\cdot|^2}$, which is in $L_2^{loc}$ since it is continuous.  As with (A3), we may take $\alpha^\prime_k=0$.  Likewise, conditions (A5) and (A6) are both clearly satisfied (with $\alpha^{\prime\prime}_k=0$) because of the exponential decay of the Gaussian.

\subsection{Inverse Multiquadrics}\label{SECMQI}

We will now consider the general (inverse) multiquadric, 
\[
\phi_{\alpha,c}(x):=(|x|^2+c^2)^{-\alpha},
\]
where $|\cdot|$ denotes the Euclidean distance on $\mathbb{R}^d$; $c>0$ is called the {\em shape parameter}, and we take $\alpha\in A=(d+1/2,\infty)$.
Consider
\[
\Phi_N=\left\{ \sum_{j=1}^Na_j\phi_{\alpha_j,c_j}(\cdot-x_j):(a_j)\subset\mathbb{C},(\alpha_j)\subset A, (c_j)\subset\mathbb{R}^+ ,(x_j)\subset\mathbb{R}^d       \right\}.
\]
Since $\Phi_N$ is manufactured to be closed under translations, we check only dilation.  If $\delta>0$, we have $\phi_{\alpha,c}(\delta \cdot) =  \delta \phi_{\alpha,c/\delta}$; hence $\Phi_N$ is dilation invariant, and (A1) is satisfied.

Condition (A2) is straightforward by noting that $\phi_{\alpha,c}$ is decreasing radially when $|x|\in (0,\infty)$; hence $|\phi_\alpha(x)|\leq c^{-2\alpha}$, which is bounded by $c^{-2d+1}$ for all $\alpha\in A$. 
 Condition (A3) requires the use of the Fourier transform of $\phi_{\alpha,c}$, which is given by (see, for example, \cite[Theorem 8.15]{Wendland}):
\begin{equation}\label{EQgmcft}
 \phica(\xi)=(2\pi)^\frac{d}{2}\dfrac{2^{1-\alpha}}{\Gamma(\alpha)}\left(\dfrac{c}{|\xi|}\right)^{\frac{d}{2}-\alpha}K_{\frac{d}{2}-\alpha}(c|\xi|),\quad \xi\in\R^d\setminus\{0\},
\end{equation}
where
\begin{equation}\label{EQbesseldef}
 K_\nu(r):=\int_0^\infty e^{-r\cosh t}\cosh(\nu t)dt,\quad r>0, \;\nu\in\R.
\end{equation}
The function $K_\nu$ is called the modified Bessel function of the second kind (see \cite[p.376]{AandS}).  We note that $\phi_{\alpha,c}$, and consequently its Fourier transform, are radial functions (i.e. $\phi_{\alpha,c}(x)=\phi_{\alpha,c}(|x|)$). It is also clear from \eqref{EQbesseldef} that $K$ is symmetric in its order; that is, $K_{-\nu}=K_\nu$ for any $\nu\in\R$.  Additionally, the decay of the Bessel function governs the decay of $\widehat{\phi_{\alpha,c}}$, which is exponential away from the origin.

Using the estimates from \cite{HL2}, we may estimate $A_{\alpha,h,j}$ as follows:
\begin{align*}
|A_{\alpha,h,j}(\xi)|\leq & \left( \dfrac{R}{|\xi+2\pi j/h|}   \right)^{d/2-\alpha} e^{cR-c|\xi+2\pi j/h|}\\
\leq & C_{\alpha,R} e^{2cR} e^{-\pi c|j|/h},
\end{align*}
where we have estimated the polynomial term with an exponential.  Thus the series obtained by summing over $j\neq0$ is bounded by $ C_{\alpha,R}e^{2cR}e^{-\pi c /h}$.  Consequently, we may take $\alpha_k > d+1/2$ to satisfy the condition.

Estimates from \cite{HL2} show that $1/\widehat{\phi_{\alpha,c}}$ is only $(2\alpha-d)$--times differentiable near the origin, hence we must have $\alpha^\prime_{k}>k/2+d/2$.  Additionally, each of these derivatives are bounded, hence $1/\widehat{\phi_{\alpha,c}}\in L_2^{loc}$ as desired.  Condition (A5) is obvious by taking $\alpha^{\prime\prime}_k>2k$.  Finally, (A6) follows from the decay of $\phi_{\alpha,c}$ when $\alpha\in A$ and the exponential decay of $\widehat{\phi_{\alpha,c}}$.

As a final remark, our use of the multiquadrics here is not entirely standard in that, typically, a fixed multiquadric order $\alpha$ is used in a given approximation scheme; however, our method detailed above forces the use of multiquadrics with differing parameters $\alpha$ to achieve good approximation for the wavelet expansions in the Triebel--Lizorkin spaces.  On the other hand, if so desired, one could specify a given $\alpha$ to be fixed in the approximation from $\Phi_N$, but only {\em after} knowledge of the cost distribution for the target function $f$.

\subsection{Mat\'{e}rn Kernels}

The Mat\'{e}rn kernels are simply the Fourier transform of the inverse multiquadrics, i.e. $\rho_{\alpha,c}(x):=\widehat{\phi_{\alpha,c}}(x)$, where $\widehat{\phi_{\alpha,c}}$ is given by \eqref{EQgmcft}.  Consequently, $\widehat{\rho_{\alpha,c}}(\xi) = (|\xi|^2+c^2)^{-\alpha}.$  The approximation space associated with these kernels is closed under dilations since $\rho_{\alpha,c}(\delta x) = \delta^{2\alpha-d}\rho_{\alpha,c\delta}(x)$.  It is a simple exercise to verify that condition (A3) holds if $\alpha>d/2$, whilst the other conditions are also readily checked in a fashion similar to the inverse multiquadrics.

\subsection{Cardinal Functions}\label{SECCardinal}

Associated with many kernels are so-called {\em cardinal functions} which exhibit the interpolatory condition on the integer lattice that the cardinal sine function does, i.e. they are functions $L$ such that $L(j)=\delta_{0,j}$, $j\in\Z^d$.  Given a kernel $\phi$, formally define
$$\widehat{L_\phi}(\xi):= \dfrac{\widehat{\phi}(\xi)}{\dsum_{j\in\Z^d}\widehat{\phi}(\xi-2\pi j)},\quad\xi\in\R^d.$$
Then as long as the Fourier inversion formula holds and $\widehat{\phi}$ decays suitably, one may show that $L_\phi$ given by the inverse Fourier transform is a cardinal function.  For sufficient conditions on the kernel $\phi$ for $L_\phi$ to be a cardinal function, see \cite{LedfordCardinal}.  In particular, we assume from here on that $\widehat{\phi}> 0$ on $\R^d$.

Cardinal functions associated with radial basis functions have been studied rather extensively \cite{Baxter,BS,Buhmann,BuhmannBook,HL1,HL2,HMNW,LedfordCardinal,LedfordSpline,RS1,RS2,RS3,RS4,Siva}.  Specifically, the cardinal functions associated with the Gaussian and general multiquadrics are known to have nice decay (exponential in the former case and polynomial based on the exponent $\alpha$ in the latter).

In the examples given below, we also have $L_\phi(x)=\sum_{j\in\Z^d}c_j\phi(x-j)$, where convergence is at least uniform on compact subsets of $\R^d$, but in many cases, the convergence is uniform and in $L_2(\R^d)$.  We note that the impetus for considering such cardinal functions may be found in the ubiquitous literature on spline interpolation (particularly cardinal splines), see for example \cite{Schoenberg,DHR} and references therein.  For some works involving splines used in sampling, see \cite{AldroubiUnser,U1,U2}.

\begin{lemma}\label{LEMCardinalA3}
Let $(\phi_\alpha)_{\alpha\in A}$ satisfy (A3), (denote the quantity there by $A_{\alpha,h,j}^\phi$).  Then the associated cardinal functions $(L_\alpha)_{\alpha\in A}$ satisfy (A3) (denote the associated functions in the condition by $A_{\alpha,h,j}^L$), and moreover
$$\dsum_{j\neq0}\|A_{\alpha,h,j}^{L}\|_{L_\infty(B)}\leq C\dsum_{j\neq 0}\|A_{\alpha,h,j}^\phi\|_{L_\infty(B)}\bigg(1+\dsum_{j\neq0}\|A_{\alpha,1,j}^\phi\|_{L_\infty(B)}\bigg).$$
\end{lemma}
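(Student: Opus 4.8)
The plan is to express the cardinal-function multiplier $A^L_{\alpha,h,j}$ in terms of the kernel multiplier $A^\phi_{\alpha,h,j}$ together with the periodization of $\widehat{\phi_\alpha}$, and then estimate the two resulting pieces separately. Write $\Sigma_\alpha(\xi):=\sum_{m\in\Z^d}\widehat{\phi_\alpha}(\xi-2\pi m)$ for the denominator appearing in the definition of $\widehat{L_\alpha}$, so that $\widehat{L_\alpha}=\widehat{\phi_\alpha}/\Sigma_\alpha$. Condition (A6) guarantees that this series converges absolutely and uniformly on compacta, so $\Sigma_\alpha$ is continuous, and it is manifestly $2\pi\Z^d$-periodic; since we are assuming $\widehat{\phi_\alpha}>0$ everywhere, $\Sigma_\alpha$ is strictly positive, and by periodicity and continuity it attains a positive minimum $\mu_\alpha:=\min_{\R^d}\Sigma_\alpha>0$. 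This uniform lower bound will be the crux of the argument.

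With this notation I would first record the factorization
\[
A^L_{\alpha,h,j}(\xi)=\frac{\widehat{L_\alpha}(\xi+2\pi j/h)}{\widehat{L_\alpha}(\xi)}=\widehat{L_\alpha}(\xi+2\pi j/h)\cdot\frac{\Sigma_\alpha(\xi)}{\widehat{\phi_\alpha}(\xi)}.
\]
The second factor I would bound by observing that for $\xi\in B$,
\[
\frac{\Sigma_\alpha(\xi)}{\widehat{\phi_\alpha}(\xi)}=1+\sum_{m\neq0}\frac{\widehat{\phi_\alpha}(\xi-2\pi m)}{\widehat{\phi_\alpha}(\xi)}=1+\sum_{m\neq0}A^\phi_{\alpha,1,-m}(\xi)\leq 1+\sum_{m\neq0}\|A^\phi_{\alpha,1,m}\|_{L_\infty(B)},
\]
where positivity lets me drop absolute values and reindex $m\mapsto-m$. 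For the first factor I would invoke the uniform lower bound on $\Sigma_\alpha$: since $\widehat{L_\alpha}(\eta)=\widehat{\phi_\alpha}(\eta)/\Sigma_\alpha(\eta)\leq \widehat{\phi_\alpha}(\eta)/\mu_\alpha$ and $\widehat{\phi_\alpha}(\xi+2\pi j/h)=A^\phi_{\alpha,h,j}(\xi)\,\widehat{\phi_\alpha}(\xi)$, I obtain for $\xi\in B$ the estimate $\widehat{L_\alpha}(\xi+2\pi j/h)\leq (\sup_B\widehat{\phi_\alpha}/\mu_\alpha)\,A^\phi_{\alpha,h,j}(\xi)$.

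Taking suprema over $B$ (all quantities are nonnegative, so the supremum of the product is dominated by the product of the suprema) and summing over $j\neq0$ then yields
\[
\sum_{j\neq0}\|A^L_{\alpha,h,j}\|_{L_\infty(B)}\leq \frac{\sup_B\widehat{\phi_\alpha}}{\mu_\alpha}\Big(1+\sum_{j\neq0}\|A^\phi_{\alpha,1,j}\|_{L_\infty(B)}\Big)\sum_{j\neq0}\|A^\phi_{\alpha,h,j}\|_{L_\infty(B)},
\]
which is exactly the claimed inequality with $C=\sup_B\widehat{\phi_\alpha}/\mu_\alpha$. To conclude that the $(L_\alpha)$ satisfy (A3), I would then set $g^L_\alpha(h):=C\,g^\phi_\alpha(h)\,(1+g^\phi_\alpha(1))$; since $g^\phi_\alpha(1)$ is a fixed finite number and $g^\phi_\alpha(h)\leq Ch^k$ for $\alpha$ beyond the thresholds supplied by (A3) for $(\phi_\alpha)$, the same power bound $g^L_\alpha(h)\leq C'h^k$ persists, which is precisely (A3) for the cardinal family. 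The one genuinely delicate point is the uniform-in-$j$ lower bound $\Sigma_\alpha(\xi+2\pi j/h)\geq\mu_\alpha$: one must \emph{not} simply discard all but the $m=0$ term of the denominator, as that term cancels the numerator of $A^\phi_{\alpha,h,j}$ and destroys the decay in $j$ needed for summability; it is essential instead to exploit the strict positivity of $\widehat{\phi_\alpha}$ together with the periodicity of $\Sigma_\alpha$.
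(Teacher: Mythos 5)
Your proof is correct, and it is built on the same algebraic identity as the paper's: writing $\Sigma_\alpha$ for the $2\pi\Z^d$--periodization of $\widehat{\phi_\alpha}$, both arguments reduce to the factorization $A^L_{\alpha,h,j}(\xi)=A^\phi_{\alpha,h,j}(\xi)\,\Sigma_\alpha(\xi)/\Sigma_\alpha(\xi+2\pi j/h)$ and both convert $\Sigma_\alpha(\xi)/\widehat{\phi_\alpha}(\xi)$ into $1+\sum_{m\neq0}A^\phi_{\alpha,1,m}(\xi)$ in the identical way. The only point of divergence is the lower bound on the shifted periodization $\Sigma_\alpha(\xi+2\pi j/h)$: the paper asserts directly that this series is at least $\widehat{\phi_\alpha}(\xi)$ by positivity, which makes the ratio of periodizations at most $1+\sum_{\ell\neq0}A^\phi_{\alpha,1,\ell}(\xi)$ and gives the stated inequality with constant $1$; you instead use periodicity, continuity (via (A6)), and strict positivity to extract the uniform minimum $\mu_\alpha$ and then pay for it with the factor $\sup_B\widehat{\phi_\alpha}$. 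Your version is logically self-contained, and your closing observation --- that retaining only the $k=0$ term of the denominator would cancel the numerator of $A^\phi_{\alpha,h,j}$ and destroy summability in $j$ --- is exactly the right diagnosis of where the difficulty lies. The one thing to watch is that your constant $C=\sup_B\widehat{\phi_\alpha}/\mu_\alpha$ depends on $\alpha$, whereas (A3) asks for $g_\alpha(h)\leq Ch^k$ with a constant that does not degrade as $\alpha$ ranges over $[\alpha_k,\infty)$; so to recover (A3) for the cardinal family exactly as the paper states it, you should either verify that this ratio stays bounded on the relevant parameter range (true for the Gaussians and multiquadrics, where $\widehat{\phi_\alpha}$ is radially decreasing so that $\mu_\alpha\geq\min_{[-\pi,\pi]^d}\widehat{\phi_\alpha}$ is comparable to $\sup_B\widehat{\phi_\alpha}$) or sharpen the lower bound on $\Sigma_\alpha(\xi+2\pi j/h)$ as the paper does.
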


\begin{proof}
Note that 
$$A_{\alpha,h,j}^L(\xi) = \dfrac{\widehat{L_\alpha}(\xi+2\pi j/h)}{\widehat{L_\alpha}(\xi)} = \dfrac{\widehat{\phi_\alpha}(\xi+2\pi j/h)}{\widehat{\phi_\alpha}(\xi)}\dfrac{\dsum_{\ell\in\Z^d}\widehat{\phi_\alpha}(\xi-2\pi\ell)}{\dsum_{k\in\Z^d}\widehat{\phi_\alpha}(\xi+2\pi j/h-2\pi k)}.$$
The series in the denominator on the right is at least $\widehat{\phi_\alpha}(\xi)$ by the assumption of positivity.  Thus
$$A_{\alpha,h,j}^L(\xi)\leq A_{\alpha,h,j}^\phi(\xi)\bigg(1+\sum_{\ell\neq0}A_{\alpha,1,\ell}^\phi(\xi)\bigg),$$
and the required inequality follows from summing over $j\neq0$.
\end{proof}

It follows immediately from this that $g_\alpha^L(h)\leq Cg_\alpha^\phi(h)$, where the functions $g_\alpha$ are the rates assumed in (A3).

For a given set of cardinal functions $(L_\alpha)$, define the $N$--term approximation space via
$$\mathbb{L}_N:=\left\{\finsum{j}{1}{N}a_jL_{\alpha_j}\bigg(\frac{\cdot-x_j}{c_j}\bigg):(a_j)\subset\C, (\alpha_j)\subset A, (x_j)\subset\R^d, (c_j)\subset\R\right\}.$$
Note that these spaces are defined to be closed under translation and dilation, so (A1) is automatically satisfied; this is done because the cardinal functions themselves are not generally preserved under these operations.  However, defining the approximation space this way is not unnatural since often in such methods, one considers interpolation using cardinal functions for different lattices.  Therefore, the use of shifted and dilated cardinal functions will correspond to interpolation of the wavelet at differently scaled lattices to form the approximant.
We note also that (A2) is satisfied for all cardinal functions since $|L_\alpha(x)|\leq1$ for all $x\in\R^d$ for the examples listed in the sequel.

\subsubsection{Gaussian Cardinal Function}

The cardinal function associated with the Gaussian kernel $\phi_\alpha$ described in section \ref{SECGaussian} was studied extensively by Riemenschneider and Sivakumar \cite{RS1,RS2,RS3,RS4,Siva}.  In particular, they showed that the cardinal function decays exponentially away from the origin, as does its Fourier transform, which implies (A5) with $\alpha_k''=0$ and (A6).  Additionally, for every $\alpha$, $|L_\alpha(x)|\leq1$ for every $x\in\R^d$.  Finally, (A3) follows from Lemma \ref{LEMCardinalA3}, while (A4) is a simple exercise based on the exponential decay of the Gaussian.

\subsubsection{Multiquadric Cardinal Functions}

Details on the behavior of the cardinal functions associated with the general multiquadrics may be found in \cite{HL1,HL2} for a broad range of exponents $\alpha$, whereas the particular cases of $\alpha=\pm1/2, 1, -k+1/2$, $k\in\N$ were considered previously \cite{Baxter,Buhmann,BM}.

In fact, checking the condition (A3) was done in Section 7 of \cite{HL1} for the univariate cardinal function, while a different estimate in \cite{HL2} demonstrates (A3) in higher dimensions. (A4) and (A5) also follow from those estimates (even with the same parameters $\alpha_k,\alpha'_k,\alpha''_k$ as in Section \ref{SECMQI}).  For any $\alpha>d/2$, the associated multiquadric cardinal function satisfies (A1)--(A6).

It should also be noted that the growing multiquadrics $\phi_{\beta,c}:=(|\cdot|^2+c^2)^\beta$ for $\beta\geq1/2$ have well-defined cardinal functions which also satisfy (A1)--(A6) \cite{HL1,HL2}.

\section{Regularity Criteria for Growing Kernels}\label{SECGrowing}

In this section we investigate the possibilities for growing kernels, which requires a reworking of conditions (A1)--(A6).  Moreover, the approximants defined in this section differ from those previously discussed, but nonetheless achieve the same approximation orders.  To begin, given $N\geq N_0$, we seek an $M:=M_N$ term approximation space
\[
\Phi_M:=\left\{\finsum{j}{1}{M}a_j\phi_{\alpha_j}(\cdot-x_j)\;:\;(a_j)\subset\C, (\alpha_j)\subset A, (x_j)\subset\R^d\right\}
\]
which is closed under translation and dilation.  Relying on the analysis above, we seek functions $\phi_{\alpha}$ such that a finite linear combination of their translates decay.  For the moment, consider the univariate case, where we could hope to use the divided difference to obtain (A2) and perhaps (A5).  In principle and practice, condition (A3) is the hardest to check.  Notice that a growing kernel leads to a singularity at the origin in the Fourier domain; however, the function $\sum_{j\in\mathbb{Z}}A_{\alpha,h,j}^{\phi}$ has no such singularity.  With this and the formulation in Proposition \ref{PROP1} in mind, we define the bivariate kernel $k_{\phi,h}$ whose Fourier transform in the first variable is given by
\begin{equation}\label{growing k def}
\widehat{k_{\phi,h}}(\xi,x):= \sum_{j\in\mathbb{Z}}\dfrac{\widehat\phi(\xi+2\pi j/h)e^{2\pi i j x/h}}{\widehat\phi(\xi) }\chi_B(\xi).
\end{equation}
By (A3), this kernel is well defined for all $x$, since
\[
\|  \widehat{k_{\phi,h}}(\cdot,x) \|_{L_{1}}\leq C\left(1+\sum_{j\neq 0}\| A_{h,j}^{\phi} \|_{L_{\infty}(B)}\right).
\]
Thus we make our approximation of $f\in \schwartz_B$ of the form
\begin{equation}\label{T sharp}
\widetilde{ T}^{\sharp}_{h}f(x):=[f*k_h(\cdot,x)](x)
\end{equation}
(i.e. the convolution is taken in the first variable) and note that when everything is smooth and decays well enough, the Poisson summation formula holds and we recover our previous work; that is, $\widetilde{T}_h^\sharp f=T_h^\sharp f$.  The advantage of this technique is that we can consider growing kernels from the outset.  For illustration purposes, consider the family of univariate multquadrics
\[
\tau_{\alpha}(x):=(x^2+c^2)^{\alpha-1/2}, \quad\alpha\in\mathbb{N}.
\]
We know that by taking sufficiently many divided differences, we will be able to obtain any polynomial rate of decay.  In fact,
$\phi_{\alpha}(x):=\lfloor \tau_\alpha \rfloor_{2\alpha}(x)=O(|x|^{-2\alpha-1})$, where $\lfloor f \rfloor_{2n}=\frac{1}{(2n)!}\sum_{j=-n}^{n} (-1)^{j+n}\binom{2n}{j+n}f(\cdot+j).$
Notice that for this choice of kernel, we have
\[
A_{\alpha,h,j}^{\phi}(\xi)=\dfrac{(1-\cos(\xi+2\pi j/h))^{\alpha} \widehat{\tau_{\alpha}}(\xi+2\pi j/h)   }{(1-\cos(\xi))^\alpha\widehat{\tau_\alpha}(\xi)},
\]
so that if we choose $h=1/N$, we get
\[
A_{\alpha,1/N,j}^{\phi}(\xi)=\dfrac{ \widehat{\tau_{\alpha}}(\xi+2\pi jN)   }{\widehat{\tau_\alpha}(\xi)}=A_{\alpha,1/N,j}^{\tau}(\xi).
\]
The interplay between the divided difference and the choice of $h=1/N$ is vital and allows us to proceed.  To do so, we need to truncate our approximation, so we set $k^{\flat}_{\phi,1/N}(\cdot, x)$ to be the function whose Fourier transform is given by
\[
  \widehat{k^{\flat}_{\phi,1/N}}(\xi,x):= \sum_{|j|\leq N^2}\dfrac{\widehat\phi(\xi+2\pi jN)e^{2\pi i xjN}}{\widehat\phi(\xi) }\chi_B(\xi);
\]
then we define
\begin{equation}\label{T flat}
\widetilde{ T}^{\flat}_{1/N}f(x):=[f*k^{\flat}_{\phi,1/N}(\cdot,x)](x).
\end{equation}

Henceforth we make the following assumptions:
\begin{enumerate}
    \item[(B1)] $\Phi_M$ is closed under translation and dilation. 
    \item[(B2)] $\phi_\alpha = \lfloor \tau_\alpha \rfloor_{n_\alpha} $ for some $(n_\alpha)\subset\N$; that is, the kernels are built out of divided differences.
    \item[(B3)] (A3) holds for each $\phi_\alpha$ and $h=1/N$.
    \item[(B4)] For every $k\in\N$ and $j\neq 0$, there exists $\tilde\alpha_k\in A$ and $C$ independent of $N$ such that for every $\alpha\geq\tilde\alpha_k$ and $0\leq l \leq k$,
                 $D^l A_{\alpha,1/N,j}^{\tau} \in L_{\infty}(B)$, and \[\sum_{j\neq 0} \| D^l A_{\alpha,1/N,j}^{\tau}  \|_{L_{\infty}(B)}\leq C.\]
\end{enumerate}

We now use these hypotheses to prove the results analogous to those found in Section \ref{SECCriteria}; note that here we have $B=[-R,R]$.
\begin{lemma}\label{lem_unbdded}
There exists $\alpha_0\in A$ such that for all $\alpha\geq\alpha_0$, $N>R/\pi$, and $f\in \schwartz_B$,
\[
|(1/N)\widetilde{T}_{1/N}^{\sharp}f(x)|\leq \|\widehat{f}\|_{L_1}\left(1+ g_\alpha(1/N)\right).
\]
\end{lemma}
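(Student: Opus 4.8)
The plan is to run the argument of Proposition~\ref{PROP1} entirely on the Fourier side, so as never to invoke the Poisson summation formula --- precisely the tool that is unavailable for a growing kernel and that motivated the bivariate device \eqref{growing k def}. The stated bound has the shape ``$1+g_\alpha(1/N)$'', and the ``$1$'' will come from the $j=0$ term of the defining series of $k_{\phi,1/N}$, while ``$g_\alpha(1/N)$'' will come from the tail $j\neq0$ controlled by (B3).

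First I would unwind the convolution \eqref{T sharp} via the convolution theorem. For each fixed $x$, the estimate following \eqref{growing k def} shows that $\widehat{k_{\phi,1/N}}(\cdot,x)\in L_1(\R)$, while $\widehat f\in L_1$ with $\supp(\widehat f)\subset B$; hence Fourier inversion is legitimate and, together with \eqref{growing k def}, gives
\[
(1/N)\,\widetilde T^{\sharp}_{1/N}f(x)=\frac{1}{2\pi}\int_B \widehat f(\xi)\,\frac{e^{i\xi x}}{\widehat\phi(\xi)}\sum_{j\in\Z}\widehat\phi(\xi+2\pi jN)\,e^{2\pi i jxN}\,d\xi .
\]
The $L_1$ bound on $\widehat{k_{\phi,1/N}}(\cdot,x)$ is exactly what licenses interchanging the sum over $j$ with the integral in $\xi$. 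The hypothesis $N>R/\pi$ guarantees $2\pi N>2R$, so that for $j\neq0$ the shifted arguments $\xi+2\pi jN$ leave $B$; this is the regime in which (B3) (equivalently (A3) with $h=1/N$) supplies control of the ratios $A_{\alpha,1/N,j}^{\phi}$, and it is the precise analogue of the condition $h<\pi/R$ in Proposition~\ref{PROP1}.

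Next I would split off the $j=0$ summand, which is identically $1$ on $B$, and apply the triangle inequality to obtain
\[
\bigl|(1/N)\,\widetilde T^{\sharp}_{1/N}f(x)\bigr|\le\frac{1}{2\pi}\int_B|\widehat f(\xi)|\Bigl(1+\sum_{j\neq0}\bigl|A_{\alpha,1/N,j}^{\phi}(\xi)\bigr|\Bigr)\,d\xi .
\]
Taking $\alpha_0$ to be the threshold furnished by (B3), so that $\sum_{j\neq0}\|A_{\alpha,1/N,j}^{\phi}\|_{L_\infty(B)}\le g_\alpha(1/N)$ for every $\alpha\ge\alpha_0$, I would majorize the inner sum by its supremum over $B$ and pull it outside the integral, which leaves $\tfrac{1}{2\pi}\|\widehat f\|_{L_1}\bigl(1+g_\alpha(1/N)\bigr)$. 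Since $1/(2\pi)\le1$, this is dominated by the claimed $\|\widehat f\|_{L_1}(1+g_\alpha(1/N))$.

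The one genuine obstacle is the first step: making the Fourier representation rigorous for a growing $\phi$, whose transform $\widehat\phi$ has a singularity at the origin. The entire point of packaging the periodized ratio into the kernel $k_{\phi,1/N}$ is that the series $\sum_{j}\widehat\phi(\cdot+2\pi jN)/\widehat\phi(\cdot)$ is a bona fide $L_1(B)$ function even though no rearrangement through Poisson summation is available term by term; thus all the care lies in verifying the $L_1$ bound and the attendant Fubini interchange, after which the remaining estimates are routine.
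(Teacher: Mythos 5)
Your argument is correct and is essentially the paper's own proof written out in full: both pass to the Fourier side via \eqref{T sharp} and the inversion formula, bound $|(1/N)\widetilde{T}^{\sharp}_{1/N}f(x)|$ by $\|\widehat f\,\widehat{k_{\phi,1/N}}(\cdot,x)\|_{L_1}$, and then invoke (B3) to control the $j\neq 0$ ratios, with the $j=0$ term contributing the ``$1$''. The paper compresses this into one displayed inequality; your version merely makes the Fubini interchange and the splitting of the $j=0$ summand explicit.
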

\begin{proof}
Using \eqref{T sharp} and the inversion formula, we have
\[
|(1/N)\widetilde{T}_{1/N}^{\sharp}f(x)|\leq \left\| \widehat{f}  \widehat{k_{\phi,1/N}}(\cdot,x) \right\|_{L_1} \leq \|\widehat{f}\|_{L_1}\left(1+ g_\alpha(1/N)\right) ,  
\]
the last inequality coming from (B3).
\end{proof}


\begin{proposition}\label{PROP1_unbdded}
There exists $\alpha_0\in A$ such that for all $\alpha\geq\alpha_0$, $N>R/\pi$, and $f\in \schwartz_B$,
\[\|f-(1/N)\widetilde{ T}_{1/N}^{\sharp}f \|_{L_\infty}\leq C\|\widehat{f}\|_{L_1}g_\alpha(1/N),\]
where $C>0$ is independent of $f$ and $N$.
\end{proposition}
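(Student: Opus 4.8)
The plan is to mirror the proof of Proposition \ref{PROP1}, but working throughout in the Fourier domain via the bivariate kernel $k_{\phi,1/N}$ rather than relying directly on the Poisson summation formula, which is precisely the tool that fails for growing kernels. First I would use \eqref{T sharp} together with the definition \eqref{growing k def} and the Fourier inversion formula to write
\begin{equation*}
(1/N)\widetilde{T}_{1/N}^{\sharp}f(x)=\dfrac{1}{(2\pi)^d}\dint_{\R^d}\widehat{f}(\xi)\,\widehat{k_{\phi,1/N}}(\xi,x)\,e^{i\langle x,\xi\rangle}\,d\xi,
\end{equation*}
and observe that inside the bracket defining $\widehat{k_{\phi,1/N}}$ the $j=0$ term is identically $\chi_B(\xi)$, which since $\supp(\widehat{f})\subset B$ contributes exactly $f(x)$ upon inversion. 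Subtracting $f(x)$ then isolates the $j\neq0$ terms, exactly as in Proposition \ref{PROP1}.

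The key steps, in order, are: (i) verify the inversion step is legitimate, which follows because the $L_1$ bound on $\widehat{k_{\phi,1/N}}(\cdot,x)$ established just before Lemma \ref{lem_unbdded} guarantees $\widehat{f}\,\widehat{k_{\phi,1/N}}(\cdot,x)\in L_1$; (ii) identify and split off the $j=0$ term to produce the difference
\begin{equation*}
f(x)-(1/N)\widetilde{T}_{1/N}^{\sharp}f(x)=\dfrac{1}{(2\pi)^d}\dint_{B}\widehat{f}(\xi)\,e^{i\langle x,\xi\rangle}\sum_{j\neq0}A_{\alpha,1/N,j}^{\phi}(\xi)\,e^{2\pi i\langle x,j\rangle N}\,d\xi;
\end{equation*}
and (iii) take absolute values, bound the oscillatory exponential by $1$, pull the sum out of the integral, and use that $\supp(\widehat{f})\subset B$ to estimate the $L_\infty$ norm by
\begin{equation*}
\dfrac{1}{(2\pi)^d}\Big[\sum_{j\neq0}\|A_{\alpha,1/N,j}^{\phi}\|_{L_\infty(B)}\Big]\|\widehat{f}\|_{L_1}\leq C\|\widehat{f}\|_{L_1}g_\alpha(1/N),
\end{equation*}
where the final inequality is exactly condition (B3) (the hypothesis that (A3) holds at $h=1/N$). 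This is structurally the same three-line estimate as in Proposition \ref{PROP1}, with $A_{\alpha,h,j}$ replaced by $A_{\alpha,1/N,j}^{\phi}$ and $g_\alpha(h)$ by $g_\alpha(1/N)$.

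The main conceptual obstacle is not the estimate itself but the justification that the formal manipulations are valid when $\phi_\alpha$ grows: one cannot invoke Poisson summation on $\phi_\alpha$ directly, so the entire argument must be phrased in terms of the convolution in the first variable and the absolutely convergent Fourier series defining $\widehat{k_{\phi,1/N}}$. The content of (B2)--(B3) is what rescues this: taking divided differences $\phi_\alpha=\lfloor\tau_\alpha\rfloor_{n_\alpha}$ and choosing $h=1/N$ forces the cancellation $A_{\alpha,1/N,j}^{\phi}=A_{\alpha,1/N,j}^{\tau}$, so the singularity of $1/\widehat{\phi_\alpha}$ at the origin is exactly compensated and the series $\sum_{j\neq0}A_{\alpha,1/N,j}^{\phi}$ remains a well-defined, uniformly bounded function on $B$. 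Once the interchange of sum and integral is justified by this uniform $L_1$/$L_\infty$ control (dominated convergence on the finite-measure set $B$), the remainder of the argument is the routine calculation above.
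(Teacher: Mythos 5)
Your proof is correct and follows essentially the same route as the paper: the paper's own argument is just the one-line version of your step (iii), bounding $|f(x)-(1/N)\widetilde{T}_{1/N}^{\sharp}f(x)|$ by $\|\widehat{f}\sum_{j\neq0}A_{\alpha,1/N,j}^{\phi}\|_{L_1(B)}$ and invoking (B3). Your additional justification of the inversion step and the identification of the $j=0$ term as $f(x)$ is exactly the computation the paper carries out in Proposition \ref{PROP1} and implicitly reuses here.
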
 

\begin{proof}
We have, from (B3),
\[
\left|  f(x)-(1/N)\widetilde{ T}_{1/N}^{\sharp}f(x)   \right| \leq \left\|  \widehat{f} \sum_{j\neq 0}A_{\alpha,1/N,j}^{\phi}   \right\|_{L_{1}(B)} \leq C\|\widehat f\|_{L_{1}}g_\alpha(1/N).
\]
\end{proof}


\begin{lemma}\label{LEMUniformHBound_unbdded}
Let $k\in\N$ and $f\in \schwartz_B$.  There exists $\alpha_k\in A$, such that for sufficiently large $N$, there is a constant $C_{f,k}>0$ independent of $N$, so that for all $\alpha\geq\alpha_k,$
\[
\| f- \widetilde{ T}^{\flat}_{1/N}f  \|_{L_\infty}\leq C_{f,k} N^{-k}.
\]
\end{lemma}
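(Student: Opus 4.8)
The plan is to imitate the proof of Proposition \ref{LEMUniformHBound}, using Proposition \ref{PROP1_unbdded} in place of Proposition \ref{PROP1}. The one structural difference is that the growing-kernel approximant \eqref{T flat} is truncated in the \emph{frequency} variable (we discard the shifts $\widehat\phi(\cdot+2\pi jN)$ with $|j|>N^2$) rather than in space; consequently the truncation error will be controlled by the very same series $g_\alpha(1/N)$ that governs the main approximation error, and no substitute for (A4) will be required. Concretely, with the normalization of Proposition \ref{PROP1_unbdded}, I would apply the triangle inequality
\[
\left\| f-(1/N)\widetilde{T}^{\flat}_{1/N}f \right\|_{L_\infty}\leq \left\| f-(1/N)\widetilde{T}^{\sharp}_{1/N}f \right\|_{L_\infty}+(1/N)\left\| \widetilde{T}^{\sharp}_{1/N}f-\widetilde{T}^{\flat}_{1/N}f \right\|_{L_\infty},
\]
and estimate the two terms separately; note that $\|\widehat f\|_{L_1}<\infty$ because $f\in H_B\subset\schwartz$, and this constant is absorbed into $C_{f,k}$.

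The first term is precisely the content of Proposition \ref{PROP1_unbdded}, which bounds it by $C\|\widehat f\|_{L_1}g_\alpha(1/N)$. To turn this into the stated rate I would invoke (B3): since (A3) holds for each $\phi_\alpha$ with $h=1/N$, for the given $k$ there is a threshold $\alpha_k\in A$ so that $g_\alpha(1/N)\leq C(1/N)^k=CN^{-k}$ whenever $\alpha\geq\alpha_k$ and $N$ is large. Hence the first term is at most $C_{f,k}N^{-k}$.

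For the truncation term I would return to the Fourier description in \eqref{growing k def}, \eqref{T sharp}, and \eqref{T flat}: subtracting the two convolution kernels leaves only the shifts with $|j|>N^2$, so that, after the inversion formula and using $\supp(\widehat f)\subset B$,
\[
\left\| \widetilde{T}^{\sharp}_{1/N}f-\widetilde{T}^{\flat}_{1/N}f \right\|_{L_\infty}\leq C\|\widehat f\|_{L_1}\sum_{|j|>N^2}\left\| A_{\alpha,1/N,j}^{\phi} \right\|_{L_\infty(B)}\leq C\|\widehat f\|_{L_1}\,g_\alpha(1/N).
\]
The second inequality is the crux of the simplification: the truncated tail of a series of nonnegative terms is at most the full series, which is exactly $g_\alpha(1/N)$ by the definition in (A3). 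Applying (B3) once more bounds this by $C_{f,k}N^{-k}$, and combining the two estimates — for $\alpha\geq\alpha_k$ and $N$ large enough that $N>R/\pi$ and the rate in (A3) is in force — yields the lemma.

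The only place genuine care is needed, and hence the main obstacle, is the bivariate-kernel bookkeeping: one must justify the Fourier-domain subtraction and confirm that the difference of the two kernels is the inverse transform (in the first variable, evaluated at $x$) of $\widehat f$ against $\sum_{|j|>N^2}A_{\alpha,1/N,j}^{\phi}e^{2\pi ijNx}$. This is legitimate because (B3) provides the $L_1$-bound on $\widehat{k_{\phi,1/N}}(\cdot,x)$ already used in Lemma \ref{lem_unbdded}. I do not expect to need the derivative bounds in (B4) for this uniform estimate; those should instead power the subsequent pointwise refinement (the growing analogue of Proposition \ref{prop_ptwise_bnd}), where one integrates by parts in $\xi$ against $\widehat f\,A_{\alpha,1/N,j}^{\tau}$ to manufacture decay in $x$.
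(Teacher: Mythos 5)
Your proposal is correct and follows essentially the same route as the paper: the triangle inequality splitting off $\widetilde{T}^{\sharp}_{1/N}f$, Proposition \ref{PROP1_unbdded} plus (B3) for the main term, and the tail sum $\sum_{|j|>N^2}\|A_{\alpha,1/N,j}^{\phi}\|_{L_\infty(B)}$ (dominated by the full series $g_\alpha(1/N)$) for the frequency-truncation term. Your observation that (B4) is not needed here but only for the subsequent pointwise estimate also matches the paper.
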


\begin{proof}
On account of (B3) and Proposition \ref{PROP1_unbdded}, we have
\begin{align*}
\left|  f(x)-(1/N)\widetilde{ T}_{1/N}^{\flat}f(x)   \right| &\leq \left|  f(x)-(1/N)\widetilde{ T}_{1/N}^{\sharp}f(x)   \right| + \left| (1/N)\widetilde{ T}_{1/N}^{\sharp}f(x) -(1/N)\widetilde{ T}_{1/N}^{\flat}f(x)   \right|\\
&\leq C\| \widehat f \|_{L_1(B)}g_{\alpha}(1/N)+ (1/N)\|\widehat{f}\|_{L_1(B)} \sum_{|j|>N^2}\|A_{\alpha,1/N,j}^{\phi}   \|_{L_{\infty}(B)}\\
& \leq C_{f,k}N^{-k}.
\end{align*}

\end{proof}


\begin{proposition}\label{prop_ptwise_bnd_unbdded}
Let $k\in\N_0$ and $f\in \schwartz_B$.  There exists $\alpha^\prime_k\in A$ so that for all $\alpha\geq \alpha^\prime_k$ and sufficiently large $N$, there is a constant $C$, independent of $N$ such that
\[|f(x)- (1/N)\widetilde{ T}_{1/N}^{\flat}f(x) |\leq C N^{-k}(1+|x|)^{-k}.\]
\end{proposition}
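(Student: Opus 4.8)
The plan is to mirror the proof of its decaying-kernel counterpart, Proposition \ref{prop_ptwise_bnd}, by splitting $\R$ into the two regions $|x|\leq 2N$ and $|x|>2N$ (recall that here $h=1/N$, so $2/h=2N$); the case $k=0$ is just boundedness via Lemma \ref{lem_unbdded}, so I take $k\geq1$. On the near region I would simply upgrade the uniform estimate: applying Lemma \ref{LEMUniformHBound_unbdded} with $2k$ in place of $k$ gives $\|f-(1/N)\widetilde{T}^{\flat}_{1/N}f\|_{L_\infty}\leq C_{f,2k}N^{-2k}$ once $\alpha\geq\alpha_{2k}$. Since $|x|\leq 2N$ forces $(1+|x|)^{-k}\geq(3N)^{-k}$, one gets $N^{-2k}\leq 3^{k}N^{-k}(1+|x|)^{-k}$, and the claimed bound follows at once on this region.

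The real work is the far region $|x|>2N$, and I would attack the difference directly on the Fourier side rather than estimating $f$ and the approximant separately. Since the $j=0$ term of $(1/N)\widetilde{T}^{\flat}_{1/N}f$ is exactly $f$ (as in the proofs of Lemma \ref{lem_unbdded} and Proposition \ref{PROP1_unbdded}),
\[
f(x)-(1/N)\widetilde{T}^{\flat}_{1/N}f(x)=-\frac{1}{2\pi}\sum_{0<|j|\leq N^2}\int_B\widehat f(\xi)\,A_{\alpha,1/N,j}^{\phi}(\xi)\,e^{ix(\xi+2\pi jN)}\,d\xi.
\]
Using the cancellation identity $A_{\alpha,1/N,j}^{\phi}=A_{\alpha,1/N,j}^{\tau}$ afforded by (B2), I would then integrate by parts $2k$ times in $\xi$ on each term, the factor $e^{2\pi ijxN}$ being constant in $\xi$ and of modulus one. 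Because $f\in H_B=\schwartz\cap PW_{B(0,R)}$, its transform $\widehat f$ lies in $C_c^\infty(B)$, so every boundary term vanishes and each integration contributes a factor $1/(ix)$; Leibniz's rule yields
\[
\left|\int_B\widehat f\,A_{\alpha,1/N,j}^{\tau}\,e^{ix(\xi+2\pi jN)}\,d\xi\right|\leq \frac{1}{|x|^{2k}}\sum_{l=0}^{2k}\binom{2k}{l}\|\widehat f^{(2k-l)}\|_{L_1(B)}\,\|D^lA_{\alpha,1/N,j}^{\tau}\|_{L_\infty(B)}.
\]
Summing over $0<|j|\leq N^2$ and invoking (B4) with its parameter taken to be $2k$ (requiring $\alpha\geq\tilde\alpha_{2k}$) bounds the whole difference by $C_{f,k}|x|^{-2k}$. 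Finally, on $|x|>2N$ one has both $|x|^{-2k}\leq C(1+|x|)^{-2k}$ and $N<1+|x|$, whence $(1+|x|)^{-2k}\leq N^{-k}(1+|x|)^{-k}$, giving the assertion with $\alpha'_k:=\max\{\alpha_{2k},\tilde\alpha_{2k}\}$.

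The main obstacle, and the reason the decaying argument does not transfer verbatim, is that for a growing kernel $1/\widehat{\phi_\alpha}$ is singular at the origin, so one cannot control $f_\phi$ and its derivatives as was done in Proposition \ref{prop_ptwise_bnd} via (A4) and the spatial decay (A5). The device that rescues the argument is exactly the interplay stressed in the text between the divided-difference structure (B2) and the scaling $h=1/N$: this is what replaces the singular multiplier $A_{\alpha,1/N,j}^{\phi}$ by the regular one $A_{\alpha,1/N,j}^{\tau}$, whose $\xi$-derivatives are controlled by (B4). Thus (B4) plays here the combined role of (A4) and (A5). The one point I would check with care is that the $j$-sum remains bounded uniformly in $N$ despite having $\sim N^2$ terms after truncation — this is guaranteed precisely because the bound $\sum_{j\neq0}\|D^lA_{\alpha,1/N,j}^{\tau}\|_{L_\infty(B)}\leq C$ in (B4) is required to hold with $C$ independent of $N$ — and that the repeated integration by parts really produces no boundary contribution, which it does not since $\widehat f\in C_c^\infty(B)$ while the derivatives $D^lA_{\alpha,1/N,j}^{\tau}$ stay bounded up to $\partial B$.
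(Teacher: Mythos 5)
Your proof is correct and follows essentially the same route as the paper's: both split at $|x|=2N$, dispatch the near region with the uniform bound of Lemma \ref{LEMUniformHBound_unbdded}, and on the far region trade powers of $x$ for $\xi$-derivatives of $A_{\alpha,1/N,j}^{\tau}$ controlled by the $N$-uniform sum in (B4). The only (cosmetic) difference is that you cancel the $j=0$ term exactly and estimate the difference directly, whereas the paper bounds $|f(x)|$ by its Schwartz decay and the approximant separately.
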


\begin{proof}
Similar to Proposition \ref{prop_ptwise_bnd}, we need only check that if $|x|> 2/h$, then
\[
\left| f(x)- (1/N)\widetilde{ T}_{1/N}^{\flat}f(x)   \right| \leq C_{k,f}\left( 1+|x|   \right)^{-2k}.
\]
Note that since $f\in\mathscr{S}$, $|f(x)|\leq C(1+|x|)^{-2k}$. From (B4), when $\alpha\geq 2\tilde\alpha_k$, we have 
\begin{align*}
\left|(1+|x|)^{2k} (1/N) \widetilde{ T}_{1/N}^{\flat}f(x)   \right| &\leq C_{f,k} \sum_{|j|>N^2} \sum_{l=0}^{2k}\|D^{l}A_{\alpha,1/N,j}^{\phi}   \|_{L_\infty(B)}\\
&\leq  \tilde{C}_{f,k}.
\end{align*}
\end{proof}

For a given $k$, we set $\alpha_K :=\max\{ \alpha_0, \alpha^\prime_k, \alpha^\prime_k \}$.  Thus the conclusions of the results above hold for every $\alpha\geq \alpha_K$.
\begin{theorem}\label{THMGrowingNBound}
Let $\psi\in \schwartz_B$ be given.  Suppose $k\in\N_0$, $N \geq N_0$, and let $I$ be a cube.  Then there exists a constant $C>0$, independent of $N$ and $I$, such that for all $N$ large enough and suitably large $\alpha$
\[
\left|\psi_I(x) - \widetilde{T}^{\flat}_{1/N}\psi_I(x)    \right|\leq C N^{-k}\left( 1+\dfrac{|x-c(I)|}{\ell(I)} \right)^{-2k}.
\]
\end{theorem}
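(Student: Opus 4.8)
The plan is to obtain this as a direct corollary of Proposition~\ref{prop_ptwise_bnd_unbdded}, exactly as Theorem~\ref{thm_N_bnd} was deduced from Proposition~\ref{prop_ptwise_bnd} in the decaying case. The guiding idea is to never apply the approximation operator to $\psi_I$ itself, but instead to approximate the mother wavelet $\psi$ and transport the estimate to $\psi_I$ through the affine change of variables $y=(x-c(I))/\ell(I)$. This is essential because $\psi\in H_B$ has a \emph{fixed} bandwidth $R$, whereas $\widehat{\psi_I}$ is supported in a ball of radius $R/\ell(I)$ that depends on $I$; reducing to $\psi$ keeps both the threshold on $N$ and the admissible $\alpha$ independent of the cube. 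Accordingly I read $\widetilde{T}^{\flat}_{1/N}\psi_I$ as the affine image $\big((1/N)\widetilde{T}^{\flat}_{1/N}\psi\big)_I$ of the mother-wavelet approximant, which lies in $\Phi_M$ by the dilation- and translation-closure (B1).

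First I would apply Proposition~\ref{prop_ptwise_bnd_unbdded} to $f=\psi$ with $2k$ in place of $k$. Since $\psi\in H_B$, there is a $C=C_{\psi,k}$ and, for all $\alpha$ at least the threshold $\alpha'_{2k}$ and all sufficiently large $N$, an estimate
\[
\left|\psi(y)-(1/N)\widetilde{T}^{\flat}_{1/N}\psi(y)\right|\leq C\,N^{-2k}\,(1+|y|)^{-2k},\qquad y\in\R^d,
\]
with $C$, the $N$-threshold, and $\alpha'_{2k}$ all independent of $I$. Substituting $y=(x-c(I))/\ell(I)$ reproduces the factor $\big(1+|x-c(I)|/\ell(I)\big)^{-2k}$ on the right and, on the left, the difference $\psi_I(x)-\widetilde{T}^{\flat}_{1/N}\psi_I(x)$ by the reading above. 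Finally, since $N\geq 1$, I bound $N^{-2k}\leq N^{-k}$ to land on the exponent $N^{-k}$ stated in the theorem; thus the claimed inequality holds with a constant and thresholds uniform in $N$ and $I$.

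I expect the main obstacle to be not the exponent bookkeeping but the uniformity over all dyadic cubes: the point is that a single $C$, a single ``$N$ large'' threshold, and a single admissible $\alpha$ must serve every $I$ at once, and this is exactly what fails if one tries to feed $\psi_I$ directly into Proposition~\ref{prop_ptwise_bnd_unbdded}, because its $I$-dependent bandwidth would force $I$-dependent thresholds. The affine-covariance reduction circumvents this. A secondary point needing care, specific to the growing setting, is verifying that the affine image of $(1/N)\widetilde{T}^{\flat}_{1/N}\psi$ is genuinely the approximant denoted $\widetilde{T}^{\flat}_{1/N}\psi_I$ and that it sits in $\Phi_M$; here the interplay between the divided-difference structure (B2) and the choice $h=1/N$, which makes the frequency-periodic factors cancel in $A^{\tau}_{\alpha,1/N,j}$, together with closure under (B1), is what guarantees that the construction stays within the approximation space. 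Once these two points are secured, the estimate is immediate.
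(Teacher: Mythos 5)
Your proposal is correct and follows essentially the same route as the paper: the authors' entire justification is the remark that one ``need only double $\alpha_K$,'' i.e.\ apply Proposition~\ref{prop_ptwise_bnd_unbdded} with $2k$ in place of $k$ and transport the estimate to $\psi_I$ via the affine covariance guaranteed by (B1), exactly as you do (your bound $N^{-2k}\leq N^{-k}$ then yields the stated exponent). Your additional remarks on why the reduction must go through the mother wavelet rather than $\psi_I$ directly, and on the role of (B2) with $h=1/N$ in keeping the approximant inside the space, correctly fill in details the paper leaves implicit.
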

We need only double $\alpha_K$ to achieve the stated bound.

In order to obtain results in the multivariate case we use tensor products, which is sufficient due to the fact that the wavelets considered here are tensor products themselves.  The univariate estimates above lead to the main results of this section.  Note that $\Phi_M^{\otimes d}$ is the space of $d$--fold tensor products of functions in $\Phi_M$, i.e. $h\in\Phi_M^{\otimes 2}$ has the form $h(x,y)=f(x)g(y)$, for $f,g\in\Phi_M$.



\begin{theorem}\label{THMGrowingMain}
Suppose that $s>0$, $N \geq N_0 $, and $1\leq p < \infty$.  There is a constant $C_{p,s,d}>0$, $M_{s,N}\in\mathbb{N}$ so that for $f\in F_{\tau, q}^{s}$, with $1/\tau = 1/p+s/d$ and $1/q=1+s/d$, there is $S_f\in \Phi_{M}^{\otimes d}$ so that
\[
\| f-S_f  \|_{L_p}\leq C_{p,s,d} N^{-s/d}|f|_{F_{\tau, q}^{s}}.
\]
\end{theorem}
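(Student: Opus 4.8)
The plan is to reduce everything to the proof of Theorem \ref{THMLpMain}, replacing the multivariate kernel estimate of Theorem \ref{thm_N_bnd} by a tensorized form of the univariate estimate in Theorem \ref{THMGrowingNBound}. Since the mother wavelet is a tensor product, I would write $\psi_I(x)=\prod_{i=1}^d\psi_{I_i}(x_i)$, where each $\psi_{I_i}$ is a univariate affine copy of the one-dimensional wavelet, $c(I)=(c(I_i))_i$ and $\ell(I)=\ell(I_i)$. Given a budget $N_I$ assigned to the cube $I$ (exactly as in the cost distribution of Section \ref{SECLp}), I approximate each univariate factor by $\widetilde{T}^\flat_{1/N_I}\psi_{I_i}$ and define the per-cube approximant to be the tensor product of these; the full approximant is then
\[
S_f:=\sum_{I\in\mathcal{D}}f_I\prod_{i=1}^d\widetilde{T}^\flat_{1/N_I}\psi_{I_i},
\]
which lies in $\Phi_M^{\otimes d}$ for an appropriate total complexity $M=M_{s,N}$. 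The goal is a pointwise per-cube estimate of the same shape as Theorem \ref{thm_N_bnd}.

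To pass from the univariate bound to the multivariate one, I would use the telescoping identity
\[
\prod_{i=1}^d a_i-\prod_{i=1}^d b_i=\sum_{i=1}^d\Big(\prod_{j<i}b_j\Big)(a_i-b_i)\Big(\prod_{j>i}a_j\Big),
\]
with $a_i=\psi_{I_i}$ and $b_i=\widetilde{T}^\flat_{1/N_I}\psi_{I_i}$. Theorem \ref{THMGrowingNBound} controls each difference $a_i-b_i$ by $C N_I^{-k}\big(1+|x_i-c(I_i)|/\ell(I_i)\big)^{-2k}$; each $a_j=\psi_{I_j}$ is bounded by its Schwartz decay since $\psi\in\schwartz$, and each $b_j$ is bounded by the same decay because $|b_j|\le|a_j|+|a_j-b_j|$. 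Multiplying and summing the $d$ terms yields
\[
\Big|\psi_I(x)-\prod_{i=1}^d\widetilde{T}^\flat_{1/N_I}\psi_{I_i}(x_i)\Big|\le C_dN_I^{-k}\prod_{i=1}^d\Big(1+\tfrac{|x_i-c(I_i)|}{\ell(I_i)}\Big)^{-2k}\le C_dN_I^{-k}\Big(1+\tfrac{|x-c(I)|}{\ell(I)}\Big)^{-2k},
\]
where the last step uses $\max_i|x_i-c(I_i)|\ge|x-c(I)|/d$. This is exactly the form of the $R_I$-majorants appearing just before Lemma \ref{lem8_equiv}, once $N_I^{-k}$ is rewritten as $\min\{1,c_I^{-\tilde k/d}\}$ via $N_I\sim c_I$ and the relabelling $\tilde k=kd$.

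With this estimate in hand, the rest of the argument is inherited verbatim from Theorem \ref{THMLpMain}. I would keep the same cost $c_I$ and budget $N_I$ (so $\sum_I N_I\le N$), define the majorants $R_I$ from the per-cube bound above, invoke the analogue of Lemma \ref{lem8_equiv} to pass from $\big\|\sum_I|f_I|R_I\big\|_{L_p}$ to $\big\|\sum_I\min\{1,c_I^{-k/d}\}|f_I|\chi_I\big\|_{L_p}$, and then run the Fefferman--Stein/maximal-function estimate as in \cite{HR} to obtain the $O(N^{-s/d})$ rate against $|f|_{F_{\tau,q}^s}$.

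The main obstacle is the bookkeeping relating the tensor structure to the budget. Each univariate $\widetilde{T}^\flat_{1/N_I}$ already uses on the order of $N_I^2$ translates, so its $d$-fold tensor product contributes on the order of $N_I^{2d}$ terms; one must check that writing the decay rate $N_I^{-k}$ as a negative power of this term count reproduces the exponent $N^{-k/d}$ of the decaying case (absorbed by the relabelling of $k$ above), and that the resulting total term count over the active cubes defines an admissible $M=M_{s,N}$. Beyond that, the only genuinely new estimate is controlling the telescoping cross terms uniformly, i.e. ensuring that the constant $C_d$ is independent of $N_I$ and of $I$; everything downstream then follows from the proof of Theorem \ref{THMLpMain}.
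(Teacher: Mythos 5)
Your proposal is correct and follows essentially the same route as the paper: expand $f$ in the tensor-product wavelet basis, approximate each $\psi_I$ by the $d$-fold tensor product of the univariate truncated approximants $\widetilde{T}^\flat_{1/N_I}$, tensorize the pointwise bound of Theorem \ref{THMGrowingNBound} into an $R_I$-type majorant, and then invoke Lemma \ref{lem8_equiv} and the cost-distribution argument of Theorem \ref{THMLpMain}. The telescoping identity you use to pass from the univariate to the multivariate pointwise estimate is exactly the step the paper compresses into the remark that the estimate ``makes use of the tensor product structure of the wavelet and approximant,'' and your accounting of the term count $M_{s,N}\sim N^{2d}$ matches the paper's.
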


\begin{proof}
We begin with the wavelet expansion of $f$, given by 
\[
f=\sum f_I\psi_I,
\]
and define $S_f$ in terms of this expansion
\[
S_f:=\sum f_I \widetilde{T}^{\flat}_{1/N_I}\psi_I.
\]
Recall that $\widetilde{T}^\flat_{1/N_I}\psi_I= \sum_{j=1}^{N_I}a_{I,j}\phi\left((\cdot-c(I)/\ell(I))   \right) $, so choose $\phi=\phi_\alpha$ with $\alpha=\alpha_s$ large enough so that Theorem \ref{THMGrowingNBound} holds.  Notice also that we have used no more than $\sum M_{s,N_I}\leq M_{s,N}$ total centers.  If everything is smooth, we have $M_{s,N}=N$, while if we use divided differences, then we have $M_{s,N}=(2\alpha_s+1)^d(N^2+1)^d$ terms involving the original kernel.

Since the bound in Theorem \ref{THMGrowingNBound} justifies Lemma \ref{lem8_equiv} with no further restriction, we use this to obtain
\[
\| f-S_f  \|_{L_p} \leq C_{k,d}\left\| \sum_{I} \min\{1,c_I^{-k/d}\}|f_I|\chi_I  \right\|_{L_p}.
\]
This estimate also makes use of the tensor product structure of the wavelet and approximant $S_f$.  To complete the proof, we must have $k>s$, which provides the restriction on $\alpha_s$.  The rest of the proof follows the same reasoning given in the proof of Theorem \ref{THMLpMain}. 
\end{proof}

The conclusion of Theorem \ref{THMGrowingMain} does not completely resolve the problem of approximation from the space $\Phi_M$ as the construction therein relies on taking tensor products for approximation in dimension larger than 1.  It remains of interest to determine an approximation method from the space $\Phi_M$ in 1 dimension that does not require taking divided differences of the growing kernel and which can be extended in a natural way to higher dimensions.

It should be noted that the techniques of \cite{DR} give convergence rates as in Theorem \ref{THMGrowingMain} for certain growing kernels such as surface splines, but the proofs rely on the kernels inverting an elliptic differential operator of a certain type.  The approximants $\widetilde{T}_h^\sharp$ and $\widetilde{T}_h^\flat$ used here are defined in different ways than those in \cite{DR,HR} and allow more flexibility in the choice of kernel (indeed they need not be associated to any differential operator), and yet achieve a similar goal.

\section{Examples of Growing Kernels}\label{SECGrowingExamples}

\subsection{Multiquadrics}

We will show that the family of divided differences of multiquadrics mentioned above satisfies properties (B1)--(B4).  To wit, consider
\[
\phi_\alpha = \lfloor \tau_\alpha \rfloor_{2\alpha},
\]
where $\tau_\alpha(x)=(x^2+\alpha^2)^{\alpha-1/2}$, $\alpha\in\mathbb{N}$, $x\in\R$.

Property (B1) is satisfied by using the same reasoning as that for (A1) in Section \ref{SECMQI}.  Property (B2) is obvious, and note the fact that we want the divided difference of order $2\alpha$ of $\tau_\alpha$.  To see that (B3) holds, note that
\[
A_{\alpha,1/N,j}^{\phi}(\xi)=\dfrac{ \widehat{\tau_{\alpha}}(\xi+2\pi jN)   }{\widehat{\tau_\alpha}(\xi)}=A_{\alpha,1/N,j}^{\tau}(\xi),
\]
so using estimates from Lemma 1 in \cite{HL1}, we have
\[
\|A_{\alpha,1/N,j}^{\tau}(\xi)\|_{L_\infty(B)}\leq e^{2cR}e^{-2\pi c |j| N }(2|j|-1)^{-\alpha},
\]
hence (A3) is satisfied.
Estimates from \cite{HL2} show that $A_{\alpha,1/N,j}^{\tau}$ is ($2\alpha-1$)--times differentiable and Theorem 5.1 there may be adapted to our needs, showing that the sum in (B4) can be bounded independent of $h$.

\subsection{Power kernels}

Our next example is $\phi_\alpha=\lfloor\tau_\alpha\rfloor_{2\alpha}$, where $\tau_\alpha(x)=|x|^{\alpha}$ for $\alpha\in\mathbb{R}^{+}\setminus2\mathbb{N}$.  We note that 
\[
\phi_\alpha(x)=O\left( |x|^{-\alpha}\right),\quad |x|\to\infty.
\]
We find the Fourier transform in Section 8.3 of \cite{Wendland}:
\[
\widehat{\tau_\alpha}(\xi)=(2\pi)^{\frac{d}{2}}\dfrac{2^{\alpha+\frac{1}{2}}\Gamma(\frac{\alpha+1}{2})}{\Gamma\left(-\frac{\alpha}{2}\right)}|\xi|^{-\alpha-1},
\]
which allows us to move forward with our computations.  Conditions (B1) and (B2) are evident, so we begin with (B3):
\[
A_{\alpha,1/N,j}^{\phi}(\xi)=\dfrac{|\xi|^{\alpha+1}}{|\xi+2\pi j N|^{\alpha+1}},
\]
thus
\[
\| A_{\alpha,1/N,j}^{\phi}  \|_{L_\infty(B)}\leq \dfrac{R^{\alpha+1}}{N^{\alpha+1}\pi^{\alpha+1}} (2|j|-1)^{-\alpha-1},
\]
hence (B3) is satisfied.  To see (B4), we apply the quotient rule repeatedly and find that
\[
|D^l A_{\alpha,1/N,j}^{\phi}(\xi)|\leq C_{\alpha,l}N^{-l}(2|j|-1)^{-\alpha},
\]
hence as long as $\alpha>1$, (B4) is satisfied. 


\section{Sobolev Interpolation Using Cardinal Functions}\label{SECInterpolation}

Let us now look more closely at the example of using cardinal functions as the family of kernels.  Due to some existing theory, the use of cardinal functions allows us to obtain error estimates for the classical Sobolev smoothness spaces which are not included in the framework above due to the restrictions on the smoothness $s$ of the Triebel--Lizorkin space.  The method here is one of interpolation, whereby more details may be found in the cardinal interpolation literature mentioned previously. 

\subsection{Decaying Functions}

To begin, formally define an alternate approximant of $f$ via
$$I_h^\sharp f(x):=\sum_{j\in\Z^d}f(hj)L_{\tau(h)}\bigg(\frac{x}{h}-j\bigg),$$
where $L_{\tau(h)}$ is one of the cardinal functions discussed in Section \ref{SECCardinal}, and $\tau(h)=h^{2}$ in the case of the Gaussian, and $h^{-1}$ for the multiquadrics.  Note that this is different from $T_h^\sharp f$ due to the fact that we use the samples of $f$ at the lattice $h\Z^d$ in the approximant rather than the values of $f_{L_{\tau(h)}}$ as defined previously.  This object has been studied in various instances before \cite{Buhmann,HL2,HMNW}, and is actually an interpolant of $f$.  By definition of the cardinal functions, it is easy to see that $I_h^\sharp f(hk)=f(hk),\;k\in\Z^d$.  Note that for the general multiquadric cardinal functions, there is an additional parameter $\alpha$ governing the power of the multiquadric $(|x|^2+\tau(h)^2)^\alpha$.  We suppress this subscript, and simply consider $\alpha$ to be fixed but arbitrary in the range $(-\infty,-d-1/2)\cup[1/2,\infty)\setminus\N$.

It is known that for certain classes of Sobolev functions, these interpolants exhibit nice approximation rates.  For example, the following holds.

\begin{theorem}[\cite{HMNW}, Theorem 2.1 and \cite{HL2}, Theorem 3.1]\label{THMInterpRates}
Let $L_{\tau(h)}$ be the cardinal function associated with the Gaussian or the general multiquadrics. If $1<p<\infty$, and $k>d/p$, then there exists a constant $C$, independent of $h$, such that for every $f\in W_p^k(\R^d)$, 
$$\|I_h^\sharp f-f\|_{L_p}\leq Ch^k\|f\|_{W_p^k}.$$
If $p=1,\infty$, then the bound changes to $C(1+|\ln h|)^dh^k\|f\|_{W_p^k}$.
\end{theorem}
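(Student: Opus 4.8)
The plan is to reduce the Sobolev interpolation estimate to the machinery already developed for approximating bandlimited Schwartz functions, by treating the cardinal function $L_{\alpha,\tau(h)}$ as a reproducing kernel whose Fourier multiplier is close to the identity on the relevant frequency band. Since $L_\alpha$ is a cardinal function, its Fourier transform satisfies $\widehat{L_\alpha}(\xi)=\widehat{\phi_\alpha}(\xi)/\sum_{j}\widehat{\phi_\alpha}(\xi-2\pi j)$, and the interpolant $I_h^\sharp f$ is precisely the quasi-interpolation operator built from the lattice $h\Z^d$. First I would pass to the Fourier side and express the error $\widehat{I_h^\sharp f}-\widehat{f}$ via the Poisson summation formula, exactly as in the proof of Proposition~\ref{PROP1}, isolating the $j=0$ term (which reproduces $\widehat{f}$ up to a symbol) and controlling the aliasing terms $j\neq0$ by the quantities $A_{\alpha,h,j}$ governed by condition (A3) and Lemma~\ref{LEMCardinalA3}.

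The key difference from the earlier sections is that here $f$ lies only in a Sobolev space $W_p^k$ rather than in the bandlimited class $H_B$, so the decomposition must separate a \emph{smoothness} contribution from an \emph{aliasing} contribution. The natural route is to split the error into two pieces: a localization/smoothness term measuring how well the symbol $\widehat{L_\alpha}(h\,\cdot)$ reproduces $\widehat{f}$ on the low-frequency region, and a truncation term capturing the high-frequency tail. For the first piece, one uses that $\widehat{L_\alpha}$ equals $1$ at the origin and is flat to high order there (this flatness is exactly what conditions (A3)/(A4) quantify through the decay of $g_\alpha(h)$), so that the reproduction error scales like $h^k$ when $f$ has $k$ derivatives in $L_p$; here one invokes the standard Strang--Fix type argument, bounding $\|D^\beta f\|_{L_p}$ for $|\beta|=k$ against the Sobolev norm. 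For the second piece, the rapid decay of $L_\alpha$ and its Fourier transform (conditions (A5), (A6) and the explicit decay rates recorded for the Gaussian and multiquadric cardinal functions) gives the summability needed to apply a discrete convolution / Young-type inequality on $\ell_p$, turning pointwise decay into an $L_p$ bound.

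The main obstacle will be the endpoint cases $p=1$ and $p=\infty$, where the logarithmic factor $(1+|\ln h|)^d$ appears. In the range $1<p<\infty$ the aliasing sum can be controlled by a maximal-function argument or by direct interpolation between $L_1$ and $L_\infty$ estimates using the Marcinkiewicz or Riesz--Thorin theorem, but at the endpoints the relevant discrete kernel fails to be uniformly summable and one instead accumulates a harmonic-series type divergence, which produces the $|\ln h|$ growth; the power $d$ reflects the tensor-product structure of the cardinal function over the $d$ coordinate directions. I would handle this by first establishing the univariate statement with a single logarithmic factor and then tensoring, so that the $d$-fold product of univariate error operators yields the $(1+|\ln h|)^d$ factor through a telescoping expansion of $\prod_{i=1}^d(\mathrm{Id}+E_i)-\mathrm{Id}$ into $2^d-1$ mixed terms, each estimated separately.

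Since this statement is quoted directly from \cite{HMNW} and \cite{HL2}, the cleanest exposition simply cites those references for the detailed verification and remarks that the Fourier-analytic estimates on $\widehat{L_\alpha}$ needed there are exactly the ones recorded in the cardinal function examples of Section~\ref{SECCardinal}; the contribution here is to place the result in the present framework rather than to reprove it from scratch.
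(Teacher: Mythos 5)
Your proposal ultimately does exactly what the paper does: Theorem~\ref{THMInterpRates} is imported verbatim from \cite{HMNW} (Theorem 2.1) and \cite{HL2} (Theorem 3.1), and the paper supplies no proof of its own, so deferring to those references is the correct and intended treatment. Your preliminary sketch is a fair outline of the Fourier-multiplier/aliasing arguments used in those sources (though note that (A3)--(A4) concern the aliasing ratios $A_{\alpha,h,j}$ and the local regularity of $1/\widehat{\phi_\alpha}$ rather than directly the flatness of $\widehat{L_\alpha}$ at the origin), but none of it is needed here beyond the citation.
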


With theorem \ref{THMInterpRates} in mind, one would naturally desire some estimate for a related interpolant which makes use of only finitely many samples of the function.  While there are many feasible ways to do this, we focus here on a method similar to the preceding analysis; to wit, let $$I_h^\flat f(x):=\sum_{j\in\Z^d\cap B_h}f(hj)L_{\tau(h)}\left(\frac{x}{h}-j\right),$$
where $B_h$ is the ball of radius $h^{-2}$ centered about the origin.  Similarly, let $I_Nf$ be as in \eqref{EQTNdef} where $N\sim h^{-2d}$. 

For arbitrary Sobolev functions, it is difficult to ascertain the behavior of the truncated interpolant; however, if one assumes a certain asymptotic decay rate on the function itself, then something may be said.  Define $\schwartz_{\kappa}$ to be the class of functions on $\R^d$ with decay $O(|x|^{-\kappa})$, $|x|\to\infty$.  Then the following holds.

\begin{theorem}\label{THMSobolevNTerm}
With the notation and parameters as in Theorem \ref{THMInterpRates}, there exists a constant $C$, independent of $h$, such that for every $f\in W_p^k(\R^d)\cap\schwartz\kappa$,
$$\|I_h^\flat f-f\|_{L_p}\leq Ch^\rho\|f\|_{W_p^k},$$
where $\rho = min\{k,\kappa-2d+d/p\}$ for multiquadric interpolation, and $\rho=\min\{k,\kappa-d\}$ for Gaussian interpolation.
\end{theorem}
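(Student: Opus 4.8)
The plan is to derive the estimate from Theorem \ref{THMInterpRates} by a triangle inequality that isolates the error introduced by truncating the interpolation sum. Writing
\[
\|I_h^\flat f - f\|_{L_p} \le \|I_h^\sharp f - f\|_{L_p} + \|I_h^\sharp f - I_h^\flat f\|_{L_p},
\]
the first term is bounded by $Ch^k\|f\|_{W_p^k}$ directly from Theorem \ref{THMInterpRates} (the hypothesis $1<p<\infty$ avoids the logarithmic loss appearing there). Thus the entire difficulty is concentrated in the truncation tail
\[
I_h^\sharp f - I_h^\flat f = \sum_{j\in\Z^d,\,|j|>h^{-2}} f(hj)\,L_{\alpha,\tau(h)}\!\left(\tfrac{\cdot}{h}-j\right),
\]
and the claimed exponent $\rho$ should emerge as the minimum of $k$ (from the first term) and the rate this tail achieves.

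First I would exploit the decay hypothesis $f\in\schwartz_\kappa$ to control the sample values: $|f(hj)|\le C_f |hj|^{-\kappa}=C_f h^{-\kappa}|j|^{-\kappa}$ for $|j|$ large. Next I would estimate the $L_p$-norm of the tail. Rescaling $x\mapsto x/h$ produces a factor $h^{d/p}$ and replaces each $L_{\alpha,\tau(h)}(\cdot/h-j)$ by an integer shift of the fixed-shape cardinal function $L_{\alpha,\tau(h)}$; one then bounds a series of the form $\sum_{|j|>h^{-2}} |f(hj)|\,\|L_{\alpha,\tau(h)}(\cdot-j)\|$ either crudely by Minkowski's inequality or, for sharp behavior when $p\ne2$, through a synthesis-operator estimate $\|\sum_j a_jL_{\alpha,\tau(h)}(\cdot-j)\|_{L_p}\lesssim \|(a_j)\|_{\ell_p}$ that uses the localization of the cardinal function. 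The spatial sum over the region $|j|>h^{-2}$ converges precisely because $\kappa>d$, and comparison with $\int_{h^{-2}}^\infty r^{d-1-\kappa}\,dr$ contributes a power of $h$ of the form $h^{2(\kappa-d)}$ (or its $\ell_p$-analogue).

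The two cases of the theorem are distinguished entirely by the localization of the cardinal function and by how its $L_p$ mass scales with $h$ through $\tau(h)$. For the multiquadrics one invokes the polynomial decay estimates of \cite{HL1,HL2} with their explicit dependence on the shape parameter $c=\tau(h)=h^{-1}$, which should yield the tail rate $h^{\kappa-2d+d/p}$; for the Gaussian one uses the exponential decay established in \cite{RS1,RS2,RS3,RS4,Siva} with $\tau(h)=h^2$, giving the faster rate $h^{\kappa-d}$. Combining with the $h^k$ term from Theorem \ref{THMInterpRates} then produces $\rho=\min\{k,\kappa-2d+d/p\}$ and $\rho=\min\{k,\kappa-d\}$ respectively.

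I expect the tail estimate to be the main obstacle, and within it the delicate point is obtaining the \emph{sharp} power of $h$: a naive Minkowski bound pulls the norm inside the sum and is wasteful for $p\ne2$, so one likely must instead use a localization (amalgam-type) estimate for the synthesis operator $(a_j)\mapsto\sum_j a_j L_{\alpha,\tau(h)}(\cdot-j)$, and must track the $h$-dependence of the cardinal function's decay through the shape parameter $\tau(h)$. This quantitative dependence is exactly what the cited cardinal-interpolation literature supplies, and it is what forces the two different exponents $\rho$.
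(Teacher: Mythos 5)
Your proposal is correct and follows essentially the same route as the paper: a triangle inequality reducing to Theorem \ref{THMInterpRates} plus a tail estimate, with the tail handled by the sample-decay bound $\sum_{|j|>h^{-2}}|f(hj)|\leq Ch^{\kappa-2d}$ (Lemma \ref{LEMSumBound}) and the $L_p$ bounds $\|L_{\alpha,\tau(h)}(\cdot/h)\|_{L_p}\lesssim h^{d/p}$ (multiquadric) and $\lesssim h^{d}$ (Gaussian) from Lemma \ref{LEMMultiplierBound}. Your worry that Minkowski's inequality is too lossy is unnecessary here: the paper applies exactly Minkowski's integral inequality and this already produces the exponents $\kappa-2d+d/p$ and $\kappa-d$ claimed in the statement, so no amalgam-type synthesis estimate is needed.
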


Before supplying the complete proof, we collect some useful lemmas.

\begin{lemma}\label{LEMSumBound}
If $f\in\schwartz_{\kappa}$, then $$\sum_{|j|>h^{-2}}|f(hj)|\leq Ch^{\kappa-2d}.$$
\end{lemma}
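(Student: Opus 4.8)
The plan is to reduce the lattice sum to a convergent radial integral by invoking the pointwise decay of $f$. Since $f\in\schwartz_\kappa$, there exist $R_0>0$ and a constant $C$ such that $|f(x)|\leq C|x|^{-\kappa}$ whenever $|x|>R_0$. For an index $j\in\Z^d$ with $|j|>h^{-2}$, the corresponding sample point satisfies $|hj|=h|j|>h^{-1}$, which exceeds $R_0$ once $h$ is sufficiently small; hence the decay estimate applies and gives $|f(hj)|\leq C(h|j|)^{-\kappa}=Ch^{-\kappa}|j|^{-\kappa}$.

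Factoring $h^{-\kappa}$ out of the sum, it then remains to control $\sum_{|j|>h^{-2}}|j|^{-\kappa}$. First I would compare this lattice sum with the integral $\int_{h^{-2}}^\infty r^{d-1-\kappa}\,dr$, which is legitimate because $r\mapsto r^{-\kappa}$ is radially decreasing and the number of lattice points in a spherical shell of radius $r$ is comparable to $r^{d-1}$. Since $\kappa>d$, the integrand is integrable at infinity, and evaluating yields
\[
\int_{h^{-2}}^\infty r^{d-1-\kappa}\,dr = \frac{1}{\kappa-d}\left(h^{-2}\right)^{d-\kappa}=\frac{1}{\kappa-d}h^{2(\kappa-d)}.
\]

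Combining the two estimates gives
\[
\sum_{|j|>h^{-2}}|f(hj)|\leq Ch^{-\kappa}\cdot\frac{C}{\kappa-d}h^{2(\kappa-d)}=C'h^{\kappa-2d},
\]
which is precisely the claimed bound. The only point requiring a little care is the integral comparison for the lattice sum: one must check that $\sum_{|j|>h^{-2}}|j|^{-\kappa}$ is bounded by a constant multiple of the integral \emph{uniformly in $h$}, which follows from a standard shell-counting argument together with the hypothesis $\kappa>d$ that guarantees convergence of the tail. Everything else is a direct computation, so I do not anticipate any serious obstacle.
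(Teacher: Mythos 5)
Your proof is correct and follows essentially the same route as the paper: bound $|f(hj)|$ by $Ch^{-\kappa}|j|^{-\kappa}$ using the decay hypothesis, then compare the lattice tail sum to the radial integral $\int_{h^{-2}}^\infty r^{d-1-\kappa}\,dr$, using $\kappa>d$ for convergence. The paper's proof is just a more compressed version of this same computation.
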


\begin{proof}
By the assumption on the decay of $f$ and the same estimate as in the proof of Lemma \ref{LEMUniformHBound}, we find that
$$\sum_{|j|>h^{-2}}|f(hj)|\leq C\sum_{|j|>h^{-2}}\frac{1}{|j|^\kappa h^\kappa}\leq C_dh^{-\kappa}\dint_{h^{-2}}^\infty r^{d-1-\kappa}dr = C_{d,\kappa}h^{-\kappa}h^{-2d+2\kappa},$$
which is at most $Ch^{\kappa-2d}$ as required.
\end{proof}

\begin{lemma}\label{LEMMultiplierBound}
Let $1<p<\infty$. If $L_{\frac{1}{h}}$ is the cardinal function associated with the general multiquadric, then 
$$\|L_{\frac{1}{h}}(\cdot/h)\|_{L_p}\leq Ch^{d/p},$$ whereas if $L_{h^2}$ is that associated with the Gaussian, then $$\|L_{h^2}(\cdot/h)\|_{L_p}\leq Ch^{d}.$$
\end{lemma}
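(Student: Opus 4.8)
The goal is to bound the $L_p$ norm of the dilated cardinal function $L_{\alpha,\tau(h)}(\cdot/h)$ in terms of $h$, and the natural strategy is to separate the scaling factor from the intrinsic size of the cardinal function. The plan is to use the change of variables $x \mapsto hx$ to pull out the dilation: for any fixed function $g$, one has $\|g(\cdot/h)\|_{L_p}^p = h^d \|g\|_{L_p}^p$, so that $\|g(\cdot/h)\|_{L_p} = h^{d/p}\|g\|_{L_p}$. Thus the desired bound $Ch^{d/p}$ for the multiquadric reduces precisely to the statement that $\|L_{\alpha,1/h}\|_{L_p}$ is bounded by a constant independent of $h$, and the Gaussian bound $Ch^d$ would reduce to showing $\|L_{\alpha,h^2}\|_{L_p}\leq Ch^{d-d/p}$. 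The fact that the two claimed exponents differ ($h^{d/p}$ versus $h^d$) signals that the scaling parameter $\tau(h)$ inside the cardinal function itself carries additional $h$-dependence that must be tracked; the multiquadric and Gaussian cases therefore cannot be handled by a single uniform estimate and must be treated separately.

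First I would record the elementary dilation identity above and reduce both claims to $L_p$ bounds on the undilated cardinal functions with their respective shape parameters. For the multiquadric case, I would invoke the decay estimates for the cardinal functions $L_{\alpha,c}$ established in the cited works \cite{HL1,HL2}: these provide polynomial decay of the form $|L_{\alpha,c}(x)| \leq C(1+|x|)^{-\mu}$ with $\mu$ governed by the exponent $\alpha$ (large enough, since we are free to take $\alpha > d/2$), which makes $L_{\alpha,c} \in L_p(\R^d)$ with an $L_p$ norm controlled uniformly over the relevant range of the shape parameter $c = 1/h$. The point is that the multiquadric cardinal function's profile is essentially scale-robust in $L_p$ once $\alpha$ is chosen appropriately, so the only $h$-dependence left is the explicit $h^{d/p}$ from the dilation. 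For the Gaussian case, I would use the exponential decay of $L_{\alpha,h^2}$ from \cite{RS1,RS2,RS3,RS4,Siva}; here the tighter concentration of the Gaussian cardinal function as $h\to 0$ produces an extra factor, and tracking the dependence of its $L_p$ norm on $h^2$ yields the factor $h^{d-d/p}$, which combines with the dilation factor $h^{d/p}$ to give the stated $h^d$.

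The main obstacle is getting the $h$-dependence of the shape parameter correct, particularly in the Gaussian case where $\tau(h)=h^2$ shrinks and the cardinal function becomes increasingly concentrated; one must verify that the decay estimates from the literature are uniform (or have explicitly controlled constants) across the one-parameter family, rather than merely holding for each fixed parameter value. I would handle this by appealing to the quantitative decay bounds in the cited references and checking that the implied constants do not blow up along the family $(\tau(h))$, then assembling the two pieces via the dilation identity to obtain the respective exponents $d/p$ and $d$.
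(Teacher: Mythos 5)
Your proposal is correct and is essentially the paper's own argument: both rest on the same pointwise decay bounds for the dilated cardinal functions --- $|L_{\alpha,1/h}(x/h)|\leq C\min\{1,h^d|x|^{-d}\}$ from \cite{HL2} and the univariate, tensored bound $|L_{\alpha,h^2}(x/h)|\leq C\min\{h,h|x|^{-1}\}$ from \cite{HMNW} --- followed by a direct integration, and your preliminary change of variables merely reorganizes that same computation. The one imprecision is in your description of the multiquadric input: the bound that is uniform in $h$ is exactly $|L_{\alpha,1/h}(y)|\leq C\min\{1,|y|^{-d}\}$, not $(1+|y|)^{-\mu}$ with $\mu$ large (the fixed-$c$ decay rates governed by $\alpha$ carry constants that degrade as $c=1/h\to\infty$, which is the uniformity issue you yourself flag), and this borderline rate is precisely why the hypothesis $1<p<\infty$ is needed; with that corrected input your reduction closes as stated.
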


\begin{proof}
Let us begin with the multiquadric case.  From \cite[Corollary 5.3]{HL2}, we see that $|L_{\frac{1}{h}}(x/h)|\leq C_{d}\min\{1,h^d|x|^{-d}\}$ (we also note that the estimate may be obtained in a straightforward manner from Section 4 of \cite{HL1}). Therefore, the $p$--th power of the $L_p$ norm in question is bounded by 
$$\dint_{B(0,h)}dx+\dint_{\R^d\setminus B(0,h)}\frac{h^{dp}}{|x|^{dp}}dx\leq Ch^d+Ch^{pd}\dint_h^\infty r^{-dp}r^{d-1}dr,$$
which is at most $C_{\alpha,p,d}h^d$.  

For the Gaussian case, one need only notice that the multivariate Gaussian cardinal function is nothing but the $d$--fold tensor product of the univariate version.  Consequently, we may use the bound of \cite[Eq. 4.4]{HMNW}, which says that for the univariate Gaussian cardinal function, $L_{h^2}(x/h)\leq C\min\{h,h|x|^{-1}\}$.  Consequently, $$\|L_{h^2}(\cdot/h)\|_{L_p(\R)}^p\leq Ch^p\dint_{-1}^1dx+Ch^p\dint_1^\infty\frac{1}{|x|^p}dx\leq Ch^p.$$
Thus the multivariate estimate is $Ch^{dp}$, whence taking $p$--th roots gives the desired inequality.

\end{proof}

With these ingredients in hand, we are now ready to supply the proof of the theorem.

\begin{proof}[Proof of Theorem \ref{THMSobolevNTerm}]
First note that $\|I_h^\flat f-f\|_{L_p}\leq \|I_h^\sharp f-f\|_{L_p}+\|I_h^\sharp f-I_h^\flat\|_{L_p},$ and the first term is majorized by $Ch^k$ by Theorem \ref{THMInterpRates}.  Now to estimate the second term, it follows from Minkowski's integral inequality \cite[Theorem 6.19, p. 194]{Folland} that
\begin{displaymath}
\begin{array}{lll}
\|I_h^\sharp f-I_h^\flat f\|_{L_p} & = & \left(\dint_{\R^d}\left|\dsum_{|j|>h^{-2}}f(hj)L_{\tau(h)}\left(\frac{x}{h}-j\right)\right|^pdx\right)^\frac{1}{p}\\
& \leq & \dsum_{|j|>h^{-2}}\left(\dint_{\R^d}|f(hj)|^p\left|L_{\tau(h)}\left(\frac{x}{h}-j\right)\right|^pdx\right)^\frac{1}{p}\\
& = & \dsum_{|j|>h^{-2}}|f(hj)|\|L_{\tau(h)}(\cdot/h)\|_{L_p}.\\

\end{array}
\end{displaymath}
From Lemmas \ref{LEMSumBound} and \ref{LEMMultiplierBound}, we see that for the multiquadric, $\|I_h^\sharp f-I_h^\flat f\|_{L_p}\leq Ch^{\kappa-2d+d/p}$, while the bound for the Gaussian is $Ch^{\kappa-d}$, thus completing the proof.
\end{proof}

\begin{corollary}\label{CORSobolevNTerm}
With the parameters as in Theorem \ref{THMSobolevNTerm}, if $N\sim h^{-2d}$, then for every $f\in W_p^k(\R^d)\cap\schwartz_\kappa$,
$$\|I_N f-f\|_{L_p}\leq CN^{-\rho/(2d)}\|f\|_{W_p^k}.$$
\end{corollary}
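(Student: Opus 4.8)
The plan is to treat this corollary as a direct reparametrization of Theorem \ref{THMSobolevNTerm}, converting the rate expressed in the mesh width $h$ into one expressed in the number of terms $N$. First I would recall the definition accompanying \eqref{EQTNdef}: the $N$--term interpolant $I_Nf$ is simply $I_h^\flat f$ evaluated at the mesh width $h=N^{-1/(2d)}$. The truncation in $I_h^\flat f$ restricts the lattice sum to $\Z^d\cap B_h$, and since $B_h$ is the ball of radius $h^{-2}$, this uses $\sim|B_h|\sim (h^{-2})^d = h^{-2d}$ lattice points; this is precisely the origin of the scaling relation $N\sim h^{-2d}$ and justifies the choice $h=N^{-1/(2d)}$.

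Next I would substitute this value of $h$ into the conclusion of Theorem \ref{THMSobolevNTerm}. That theorem gives $\|I_h^\flat f-f\|_{L_p}\leq Ch^\rho\|f\|_{W_p^k}$ for every $f\in W_p^k(\R^d)\cap\schwartz_\kappa$, with $\rho$ as specified for each kernel and with $C$ independent of $h$. Since $h=N^{-1/(2d)}$ yields $h^\rho = N^{-\rho/(2d)}$, and since $I_h^\flat f = I_Nf$ under this identification, the estimate becomes exactly
\[
\|I_Nf-f\|_{L_p}\leq CN^{-\rho/(2d)}\|f\|_{W_p^k},
\]
as claimed, with the same $\rho=\min\{k,\kappa-2d+d/p\}$ in the multiquadric case and $\rho=\min\{k,\kappa-d\}$ in the Gaussian case.

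There is no genuine obstacle here, as the corollary is a bookkeeping statement rather than a new estimate. The only points requiring care are to confirm that the number of terms truly scales like $h^{-2d}$ (so that inverting to $h\sim N^{-1/(2d)}$ is legitimate, with any multiplicative constant in the relation $N\sim h^{-2d}$ absorbed harmlessly into $C$), and to note that the constant furnished by Theorem \ref{THMSobolevNTerm} is already independent of $h$, hence of $N$, so the resulting bound is uniform in $N$.
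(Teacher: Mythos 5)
Your proposal is correct and is exactly the argument the paper intends: the corollary follows by substituting $h = N^{-1/(2d)}$ (justified by the $\sim h^{-2d}$ lattice points in the truncation ball) into the bound of Theorem \ref{THMSobolevNTerm}, and the paper accordingly states the corollary without any separate proof. Your added remarks on absorbing the implicit constant in $N\sim h^{-2d}$ and on the $h$--independence of $C$ are sound bookkeeping and nothing more is needed.
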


Evidently, the $N$--term interpolants considered here work well for Sobolev functions which decay away from the origin.  Of course, if a given function peaks far away from the origin but still decays away from the peak, one should translate the peak to the origin and then interpolate, in which case the same estimate as in Corollary \ref{CORSobolevNTerm} holds for $I_N(f_I)-f_I$.  

It follows easily from the lemmas above that one can still estimate the difference of the full and truncated interpolants in $L_1$ and $L_\infty$ and get analogous bounds in terms of $h$.  However, the method of proof of Theorem \ref{THMSobolevNTerm} is not sufficient to estimate approximation orders of $I_h^\flat f-f$ in these spaces due to the logarithmic term found in Theorem \ref{THMInterpRates}.  Moreover, it has proven elusive to estimate the truncated interpolant via other means.

\subsection{Greedy Interpolation}

Of interest, especially in light of the recent advances in Greedy approximations (e.g. \cite{Temlyakov}), is the {\em greedy interpolant} of $f$, which for a given lattice would be formed by keeping only the $N$ largest (in absolute value) samples $f(hj)$.  That is, $G_N f(x) = \sum_{j\in\Lambda}f(hj)L_{\tau(h)}(x/h-j)$, where $|f(hj)|\geq|f(hk)|$, $j\in\Lambda$, $k\notin\Lambda$, and $|\Lambda|=N$.  Inspired by the estimates in \cite{BD} for greedy quasi-interpolants, we may provide similar estimates here for the greedy Gaussian and multiquadric interpolants.   

The rest of the argument mimics that of \cite{BD}, although the proofs of the intermediate steps do not follow directly from their argument, but rather come from other known estimates for the Gaussian and multiquadric cardinal functions.  Before stating the theorem, we need an auxiliary lemma on the Lebesgue constants for the given cardinal functions.

\begin{lemma}\label{LEMLebesgueConstants}
Let $\alpha\in(-\infty,-d-1/2)\cup[1/2,\infty)\setminus\N$ be a fixed parameter for the general multiquadric, and let $L_{\tau(h)}$ be either the cardinal function associated to the multiquadric or the Gaussian.  Then for $h>0$, we have
\[\underset{y\in\R^d}\sup \;\sum_{j\in\Z^d}|L_{\tau(h)}(y-j)|\leq C|\ln h|^d, \]
for some $C$ independent of $h$.
\end{lemma}
\begin{proof}
The estimate is well-known for the Gaussian, and may be found in \cite[Theorem 5.2]{RS2}, while for the multiquadrics, the estimate in dimension 1 for general $\alpha$ is given by \cite[Proposition 7]{HL1} (though for $\alpha=1/2$, the estimate is in \cite{RS1}), and the proof of the bound in higher dimensions follows the same line of reasoning upon using the bounds provided in \cite[Corollary 4.7]{HL2}.
\end{proof}

\begin{theorem}\label{THMGreedy}
Let the parameters $p, k$ and $\alpha$ be as in Theorem \ref{THMInterpRates}, and let $G_N$ be the greedy Gaussian or multiquadric interpolant with parameter $\alpha$. If $h\sim N^{-\frac{1}{kp+d}}$, then
\[\|f-G_Nf\|_{L_\infty}\leq CN^{-\frac{k}{kp+d}}(1+|\ln N|)^d\|f\|_{W_p^k},\quad f\in W_p^k. \]
\end{theorem}
\begin{proof}
First of all, by Theorem \ref{THMInterpRates}, we have that \[\|f-I_h^\sharp f\|_{L_\infty}\leq CN^{-\frac{k}{kp+d}}(1+|\ln N|)^d\|f\|_{W_p^k},\]
whereby it suffices to estimate $\|I_h^\sharp f-G_Nf\|_{L_\infty}.$
The quantity in question is, by definition, \[\left\|\sum_{j\notin\Lambda} f(hj)L_{\tau(h)}\left(\frac{\cdot}{h}-j\right)\right\|_{L_\infty}.\]
Since $f\in W_p^k$, it follows from the Sobolev embedding Theorem that $\sum_{j\in\Z^d}|f(hj)|^p\leq Ch^{-d}\|f\|_{W_p^k},$ whereby (as noted in \cite{BD}, there can be at most $n$ elements of $\Z^d$ for which $|f(hj)|\geq Ch^{-\frac dp}N^{-\frac1p}\|f\|_{W_p^k}.$  Consequently, since $|\Lambda|=N$,  the reverse inequality is true for $|f(hj)|$ for all $j\notin\Lambda$.  Finally, we need only appeal to Lemma \ref{LEMLebesgueConstants} to see that 
\[\|I_h^\sharp f-G_Nf\|_{L_\infty}\leq C|\ln N|^dh^{-\frac{d}{p}}N^{-\frac1p}\|f\|_{W_p^k}. \]  

The result follows upon combining this estimate with that from Theorem \ref{THMInterpRates}, with the requirement on $h$.
\end{proof}

Of particular future interest would be to give estimates on $\|f-G_Nf\|_{L_p}$ in the above setting; however, this remains an elusive task at present.

\section{Cost Distribution}\label{SECCost}

\subsection{Examples}

In this section, we analyze what the cost distribution looks like for some particular types of functions.  Suppose that $\psi$ is the mother wavelet, and that $\psi_{I}$ is as defined previously for a dyadic cube $I$.  Then suppose that $\{I_j\}_{j=1}^M$ are disjointly supported dyadic cubes, and we consider the cost distribution for the function $$f=\sum_{j=1}^M a_j\psi_{I_j}.$$
We recall the relation of the parameters: for a fixed $p$ and $s$, we have $1/\tau = 1/p+s/d$, and $1/q=1+s/d$.  Then
$$M_{s,q}f(x) = \left(\finsum{j}{1}{M}|I_j|^{-\frac{sq}{d}}|a_j|^q\chi_{I_j}(x)\right)^\frac{1}{q}.$$
Therefore, since the $I_j$ are disjoint,
\begin{displaymath}
\begin{array}{lll}
|f|_{F_{\tau,q}^s} & = & \left(\dint_\R\left(\finsum{j}{1}{M}|I_j|^{-\frac{sq}{d}}|a_j|^q\chi_{I_j}(x)\right)^\frac{\tau}{q}dx\right)^\frac{1}{\tau}\\
& = & \left(\finsum{j}{1}{M}\dint_{I_j}|I_j|^{-\frac{s\tau}{d}}|a_j|^\tau dx\right)^\frac{1}{\tau}\\
& = & \left(\finsum{j}{1}{M}|a_j|^\tau|I_j|^\frac{\tau}{p}\right)^\frac{1}{\tau},\\
\end{array}
\end{displaymath}
the final step coming from the observation that $1-s\tau/d=\tau/p$.

Next, recall that $m_{s,q,I_j}:=M_{s,q,I_j} = |I_j|^{-\frac{s}{d}}|a_j|$, and by definition $f_{I_j}=a_j$.  Then the cost of the cube $I_j$ is
\begin{align}\label{EQCost}
    c_{I_j} &= \dfrac{1}{\finsum{m}{1}{M}|a_m|^\tau|I_m|^\frac{\tau}{p}}|I_j|^{-\frac{s}{d}(\tau-q)}|a_j|^{\tau-q}|a_j|^q|I_j|^q N \nonumber\\
    & = \dfrac{|a_j|^\tau|I_j|^\frac{\tau}{p}}{\finsum{m}{1}{M}|a_m|^\tau|I_m|^\frac{\tau}{p}}\;N.
\end{align}

Recalling that $N_{I_j}=\floor{c_{I_j}}$ if the right-hand side is at least $N_0$, and $N_{I_j}=0$ otherwise, we have that
\[S_f = \finsum{j}{1}{M}a_jT_{N_j}\psi_{I_j}.\]

As an example of the above, we illustrate the cost distribution for a function having a disjoint wavelet expansion.  Consider a fixed 7--term wavelet, $f = a_{1}\psi(2^{j_1}(\cdot+3))+a_2\psi(2^{j_2}(\cdot+2))+\dots+ a_7\psi(2^{j_7}(\cdot-3))$ where the coefficients $a_1,\dots a_7$ and the dilations were  randomly generated to yield coefficients $(9, 5, 5, 3, 8, 4, 1)$ and dilations of $(3, 1, 1, 2, 0, 0, 4)$. The number of terms is of course arbitrary, but the cost distribution is displayed in Table \ref{TABRandom} for increasing budget for the parameters $s=1, d=1, p=1, \tau=\frac12$.

\begin{table}[!h]
\begin{tabular}{| c | c | c |}
\hline $N$  & Cost $(c_1,\dots,c_7)$\\\hline
100 &   (10, 16, 16, 9, 28, 20, 2) \\\hline
200 &   (21, 31, 31, 17, 56, 39, 5)\\\hline
300 &   (31, 47, 47, 26, 83, 59, 7)\\\hline
400 &  (42, 62, 62, 34, 111, 79, 10)\\\hline
500 & (52, 78, 78, 43, 139, 98, 12)\\\hline
\end{tabular}

\caption{Cost distribution for $f$ with sparse wavelet representation.}
\label{TABRandom}
\end{table}

It can be seen from this example that dilations giving larger supports are more favored in the cost distribution, as the terms with dilation 1 and 0 have the largest associated cost, while those with large dilation (hence smaller measure of $|I_j|$) have smaller costs.  Additionally, the coefficients $a_j$ scale the cost according to the power of $\tau$, for example $c_5\approx \sqrt{2}c_6$, where $a_5=2a_6$ (recall that $\tau=\frac12$).

\subsection{Limitations}

Here, we present an inherent limit to this scheme.  It should not be surprising based on the discussion above, but the cost distribution scheme can fail to capture enough information on a signal whose wavelet expansion is very spread out.  Let us suppose that $N$ is fixed, and arbitrarily large.  We will exhibit a signal whose $N$--term approximant $S_{f,N}$ is identically 0 because the cost of each dyadic cube is 0.

Let $M>N^\tau$, and let $(I_j)_{j=1}^M$ be disjointly supported cubes of unit volume, and define $f=\finsum{j}{1}{M}\psi_{I_j}.$  By \eqref{EQCost}, we find that for each $1\leq j\leq M$, 
$$c_{I_j}=M^{-\frac{1}{\tau}}N<1,$$
whereby, $N_{I_j}=0$ for every $j$.

Finally, it should be noted that the theoretical cost distribution above is difficult to implement in practice.  In particular, it assumes knowledge of all the wavelet coefficients of the target function, $f$.  Moreover, computing the approximants is not typically fast in this case.  For the future, it would be interesting to consider other approximations from the spaces $\Phi_N$ above which may obtain the optimal recovery rates and which are more readily implemented numerically.  However, the main purpose of this work was to demonstrate criteria on general approximation spaces which yield best $N$--term approximation orders analogous to the wavelet decomposition of the space itself.

\begin{Acknowledgements*}
Work for this article was completed while the first author was an Assistant Professor at Vanderbilt University. In the latter stages, the first author was partially supported by the NSF TRIPODS program under grant CCF-1423411.  The second author would like to thank the Mathematics Department at Vanderbilt for hosting him during his stay, and supporting collaboration on this article.
\end{Acknowledgements*}



\begin{thebibliography}{00}


\bibitem{AandS}
M. Abramowitz and I. A. Stegun (Eds.), {\em Handbook of mathematical functions: with formulas, graphs, and mathematical tables.} No. 55, Courier Dover Publications, 1972.

\bibitem{AldroubiUnser} A. Aldroubi and M. Unser, Sampling procedures in function spaces and asymptotic equivalence with Shannon's sampling theory, {\em Numer. Funct. Anal. Optim.} \textbf{15}(1-2) (1994), 1-21.

\bibitem{Atreas} N. D. Atreas, On a class of non-uniform average sampling expansions and partial reconstruction in subspaces of $L_2(\R)$, {\em Adv. Comput. Math.} \textbf{36}(1) (2012), 21-38.

\bibitem{Baxter}
B. J. C. Baxter, The asymptotic cardinal function of the multiquadratic
$\phi(r)=(r^2+ c^2)^\frac{1}{2}$ as $c\to\infty$, {\em Comput. Math. Appl.} \textbf{24}(12) (1992), 1-6.

\bibitem{BS}
B. J. C. Baxter and N. Sivakumar, On shifted cardinal interpolation by Gaussians and multiquadrics, {\em J. Approx. Theory } {\bf 87}(1) (1996),  36-59.

\bibitem{DDR1} C. de Boor, R. DeVore, and A. Ron, Approximation from Shift-Invariant Subspaces of $L_2(\mathbb{R}^d)$, \emph{Trans. Amer. Math. Soc.} {\bf 341}(2) (1994), 787--806.     

\bibitem{DDR2} C. de Boor, R. DeVore, and A. Ron, The Structure of Finitely Generated Shift-Invariant Spaces in $L_2(\mathbb{R}^d)$, \emph{J. Funct. Anal.} {\bf 119} (1994), 37--78.  

\bibitem{DHR} C. De Boor, K. H\"{o}llig, and S. D. Riemenschneider, {\em Box splines}, Vol. 98. Springer Science \& Business Media, 2013.

\bibitem{Bownik} M. Bownik, The structure of shift-invariant subspaces of $L_2(\R^n)$, {\em J. Funct. Anal.} \textbf{177}(2) (2000), 282-309.

\bibitem{Buhmann}
M. D. Buhmann, Multivariate cardinal interpolation with radial-basis functions,
{\em Constr. Approx.} \textbf{6}(3) (1990), 225-255.

\bibitem{BuhmannBook} M. D. Buhmann, {\em Radial Basis Functions: Theory and Implementations}, Vol. 12. Cambridge University Press, 2003.

\bibitem{BM} M. D. Buhmann and C. A. Micchelli, Multiquadric interpolation improved, {\em Comput. Math. Appl.} \textbf{24}(12) (1992), 21-25.

\bibitem{BD}  M. D. Buhmann and F. Dai, Compression using quasi-interpolation, {\em Ja\'{e}n Journal on Approximation} \textbf{7}(2) (2015), 203-230.

\bibitem{Daubechies} I. Daubechies, {\em Ten lectures on wavelets}, Vol. 61. Philadelphia: Society for industrial and applied mathematics, 1992.

\bibitem{DV} L. de Carli and P. Vellucci, $p$--Riesz bases in quasi shift invariant spaces, arXiv: 1710.00702, To Appear in: \textit{Contemporary Mathematics:} Proceedings of the AMS Special Sessions'Frames, Harmonic Analysis and Operator Theor'  Eds: Y. Kim, S. K. Narayan, G.Picioroaga, and E. Weber.

\bibitem{DJP} R. DeVore, B. Jawerth, and V. Popov, Compression of wavelet decompositions, {\em Amer. J. Math.} \textbf{114} (1992), 737-785.

\bibitem{DR}
R. DeVore and A. Ron, Approximation using scattered shifts of a multivariate function, \emph{ Trans. Amer. Math. Soc.} {\bf 362}(12) (2010), 6205--6229.

\bibitem{Folland}
G. B. Folland, {\em Real Analysis: Modern Techniques and Their Applications}, Second Edition, John Wiley \& Sons, 1999.

\bibitem{G}
L. Grafakos, {\em Classical and Modern Fourier Analysis}, Pearson Education, Inc., Upper Saddle River, NJ, 2004.

\bibitem{GS} K. Gr\"{o}chenig and J. St\"{o}ckler, Gabor Frames and Totally Positive Functions, {\em Duke Math. J.} \textbf{162}(6) (2013), 1003-1031.

\bibitem{Hamm} K. Hamm, Approximation rates for interpolation of Sobolev functions via Gaussians and allied functions, {\em J. Approx. Theory} \textbf{189} (2015), 101-122.

\bibitem{HL1} 
K. Hamm and J. Ledford, Cardinal interpolation with general multiquadrics, {\em Adv. Comput. Math.} \textbf{42}(5) (2016), 1149-1186.

\bibitem{HL2}
K. Hamm and J. Ledford, Cardinal interpolation with general multiquadrics: convergence rates, {\em Adv. Comput. Math.} \textbf{44}(4) (2018), 1205--1233.

\bibitem{HLQuasi} K. Hamm and J. Ledford, On the structure and interpolation properties of quasi shift-invariant spaces, {\em J. Funct. Anal.} \textbf{274}(7) (2018), 1959--1992.

\bibitem{HMNW} T. Hangelbroek, W. Madych, F. Narcowich and J. D. Ward, Cardinal
interpolation with Gaussian kernels, {\em J. Fourier Anal. Appl.} \textbf{18}
(2012), 67-86.

\bibitem{HR}
T. Hangelbroek and A. Ron, Nonlinear approximation using Gaussian kernels, \emph{J. Funct. Anal.} {\bf 259} (2010), 203--219.


\bibitem{Jia} R. Q. Jia, Shift-invariant spaces on the real line, {\em Proc. Amer. Math. Soc.} \textbf{125}(3) (1997), 785-793.

\bibitem{Johnson} M. J. Johnson, On the approximation order of principal shift-invariant subspaces of $L_p(\R^d)$, {\em J. Approx. Theory} \textbf{91} (1997), 279-319.

\bibitem{Johnson2} M. J. Johnson, Scattered data interpolation from principal shift-invariant spaces, {\em J. Approx. Theory} \textbf{113}(2) (2001), 172-188. 

\bibitem{Kalton}
N. J. Kalton, S. Mayboroda, and M. Mitrea, Interpolation of Hardy-Sobolev-Besov-Triebel-Lizorkin spaces and applications to problems in partial differential equations, {\em Contemporary Mathematics}, \textbf{445} (2007), 121--178.

\bibitem{KP}
G. Kyriazis and P. Petrushev, New bases for Triebel-Lizorkin and Besov spaces, {\em Trans. Amer. Math. Soc.} \textbf{354} (2002), no. 2, 749--776.

\bibitem{Ledford}
 J. Ledford, Recovery of Paley-Wiener functions using scattered translates of regular interpolators, \emph{J. Approx. Theory} \textbf{173} (2013), 1–13.

\bibitem{LedfordCardinal}
J. Ledford, On the convergence of regular families of cardinal interpolators,
{\em  Adv. Comput. Math.} \textbf{41} (2015), 357--371.

\bibitem{LedfordSpline} J. Ledford, Convergence properties of spline-like cardinal interpolation operators acting on $\ell^p$ data, {\em J. Fourier Anal. Appl.} \textbf{23}(1) (2017), 229--244.

\bibitem{M}
Y. Meyer, {\em Wavelets and Operators}, Cambridge Stud. Adv. Math., Vol. 37. Cambridge University Press, 1992, translated from the 1990 French original by D.H. Salinger.

\bibitem{RS1} S. D. Riemenschneider and N. Sivakumar, On the
cardinal-interpolation operator associated with the one-dimensional
multiquadric, {\em East J. Approx.} \textbf{7}(4) (1999), 485--514.

\bibitem{RS2} S. D. Riemenschneider and N. Sivakumar, On cardinal interpolation by Gaussian radial-basis functions: properties of fundamental functions and estimates for Lebesgue constants, {\em J. Anal. Math.} \textbf{79} (1999), 33--61.

\bibitem{RS3} S. D. Riemenschneider and N. Sivakumar, Gaussian radial basis functions: Cardinal interpolation of $\ell_p$ and power growth data, {\em Adv. Comput. Math.} \textbf{11} (1999), 229--251.

\bibitem{RS4} S. D. Riemenschneider and N. Sivakumar, Cardinal interpolation by Gaussian functions: A survey, {\em J. Analysis} \textbf{8} (2000), 157--178.

\bibitem{Schoenberg} I. J. Schoenberg, {\em Cardinal Spline Interpolation},  Vol. 12. Society for Industrial and Applied Mathematics, Philadelphia, 1973.

\bibitem{Siva} N. Sivakumar, A note on the Gaussian cardinal-interpolation operator, {\em Proc. Edinb. Math. Soc. (2)} \textbf{40} (1997), 137-150.

\bibitem{Temlyakov} V. Temlyakov, {\em Greedy approximation}, Vol. 20. Cambridge University Press, 2011.

\bibitem{U1} M. Unser, Sampling--50 years after Shannon, {\em Proc. IEEE} \textbf{88}(4) (2000), 569-587.

\bibitem{U2} M. Unser and T. Blu, Cardinal exponential splines: part I--theory and filtering algorithms, {\em IEEE Trans. Sig. Process.} \textbf{53}(4) (2005), 1425-1438.

\bibitem{Wendland}
H. Wendland, {\em Scattered Data Approximation,} Vol. 17. Cambridge University Press, 2005.

\end{thebibliography}
\end{document}